       \font\tenmsb=msbm10
       \font\sevenmsb=msbm7
       \font\fivemsb=msbm5
\let\amstexloaded@\relax\fi
       \def\spaces@{\space\space\space\space\space}
       \def\spaces@@{\spaces@\spaces@\spaces@\spaces@\spaces@}
       \def\space@.  {\futurelet\space@\relax}
       \def\Err@#1{\errhelp\defaulthelp@\errmessage{AmS-TeX error: #1}}
       \def\relaxnext@{\let\next\relax}
       \def\accentfam@{7}
       \def\noaccents@{\def\accentfam@{0}}
       \def\Cal{\relaxnext@\ifmmode\let\next\Cal@\else
       \def\next{\Err@{Use \string\Cal\space only in math mode}}\fi\next}
       \def\Cal@#1{{\Cal@@{#1}}}
       \def\Cal@@#1{\noaccents@\fam\tw@#1}
       \def\Bbb{\relaxnext@\ifmmode\let\next\Bbb@\else
       \def\next{\Err@{Use \string\Bbb\space only in math mode}}\fi\next}
       \def\Bbb@#1{{\Bbb@@{#1}}}
       \def\Bbb@@#1{\noaccents@\fam\msbfam#1}
\newtheorem{thm}{Theorem}[section]
\newtheorem{prop}[thm]{Proposition}
\newtheorem{lem}[thm]{Lemma}
\newtheorem{rem}[thm]{Remark}
\newtheorem{iteration lemma}[thm]{iteration Lemma}
\newtheorem*{acknowledgements*}{ACKNOWLEDGEMENTS}
\begin{document}

\setlength{\columnsep}{5pt}
\title{\bf Core partial order in rings with involution}
\author{Xiaoxiang  Zhang\footnote{ E-mail: z990303@seu.edu.cn},
\ Sanzhang  Xu\footnote{ E-mail: xusanzhang5222@126.com},
\ Jianlong Chen\footnote{ Corresponding author. E-mail: jlchen@seu.edu.cn }.\\
Department of  Mathematics, Southeast University \\  Nanjing 210096,  China }
     \date{}

\maketitle
\begin{quote}
{\textbf{Abstract:} \small
  Let $R$ be a unital ring with involution.
  Several characterizations and properties of core partial order are given.
  In particular, we investigate the reverse order law $(ab)^{\tiny\textcircled{\tiny\#}}=b^{\tiny\textcircled{\tiny\#}}a^{\tiny\textcircled{\tiny\#}}$
  for two core invertible elements $a,b\in R$.
  Some relationships between core partial order and other partial orders are obtained.

\textbf {Keywords:} {\small Core inverse, core partial order, reverse order law, EP element.}

}
\end{quote}

\section{ Introduction }\label{a}
The core inverse of a complex matrix was introduced by Baksalary and Trenkler \cite{BT}.
Let $M_{n}(\mathbb{C})$ be the ring of all $n\times n$ complex matrices. A matrix $X\in M_{n}(\mathbb{C})$ is called a core inverse of $A\in M_{n}(\mathbb{C})$, if it satisfies
$AX=P_{A}$ and $\mathcal{R}(X)\subseteq \mathcal{R}(A)$,
where $\mathcal{R}(A)$ denotes the column space of $A$,
and $P_{A}$ is the orthogonal projector onto $\mathcal{R}(A)$.
And if such a matrix $X$ exists, then it is unique and denoted by $A^{\tiny\textcircled{\tiny\#}}$.
The core partial order for a complex matrix were also introduced in \cite{BT}.
Let $\mathbb{C}^{CM}_{n}=\{A\in M_{n}(\mathbb{C})\mid \mathrm{rank}(A)=\mathrm{rank}(A^{2})\}$
, $A\in \mathbb{C}^{CM}_{n}$ and $B\in M_{n}(\mathbb{C})$. The binary operation $\overset{\tiny\textcircled{\tiny\#}}\leq$ is defined as follows:
$$A\overset{\tiny\textcircled{\tiny\#}}\leq B~\Leftrightarrow~
A^{\tiny\textcircled{\tiny\#}}A=A^{\tiny\textcircled{\tiny\#}}B~~\mathrm{and}~~AA^{\tiny\textcircled{\tiny\#}}=BA^{\tiny\textcircled{\tiny\#}}.$$
In \cite[Theorem 6]{BT}, it is proved that core partial order is a matrix partial order. Baksalary and Trenkler gave several
characterizations and various relationships between the matrix core partial order and other matrix partial orders by using the decomposition of Hartwig and Spindelb\"{o}ck \cite{HS}.
In \cite{RD}, Raki\'{c} and Djordjevi\'{c} generalized the matrix core partial order to the ring case. They gave various equivalent conditions of core partial
order and investigated relationships between the core partial order and other partial orders in general rings.
Motivated by \cite{BT,M2,MRT,R,RD}, in this paper, we give some new equivalent conditions and properties for core partial order in general rings.
Moreover, some new relationships between core partial order and other partial orders are obtained. As an application, we prove the reverse law for two core invertible
elements under the core partial order.

Let $R$ be a $*$-ring, that is a ring with an involution $a\mapsto a^*$
satisfying $(a^*)^*=a$, $(ab)^*=b^*a^*$ and $(a+b)^*=a^*+b^*$ for all $a,b\in R$.
We say that $x\in R$ is the Moore-Penrose inverse of $a\in R$, if the following hold:
$$axa=a, \quad xax=x, \quad (ax)^{\ast}=ax \quad (xa)^{\ast}=xa.$$
There is at most one $x$ such that above four equations hold.
If such an element $x$ exists, it is denoted by $a^{\dagger}$. The set of all Moore-Penrose invertible elements will be denoted by $R^{\dagger}$.
An element $x\in R$ is an inner inverse of $a\in R$ if $axa=a$ holds. The set of all inner inverses of $a$ will be denoted by $a\{1\}$.
An element $a\in R$ is said to be group invertible
if there exists $x\in R$ such that the following equations hold:
$$axa=a, \quad xax=x, \quad ax=xa.$$
The element $x$ which satisfies the above equations is called a group inverse of $a$.
If such an element $x$ exists, it is unique and denoted by $a^\#$. The set of all group invertible elements will be denoted by $R^\#$.
An element $a\in R$ is said to be an EP element if $a\in R^{\dagger}\cap R^\#$ and $a^{\dagger}=a^\#.$ The set of all EP elements will be denoted by $R^{EP}$.
In \cite{RDD} Raki\'{c}, Din\v{c}i\'{c} and Djordjevi\'{c} generalized the core inverse of a complex matrix to the case of an element in a ring.
Let $a,x\in R$, if
$$axa=a,~xR=aR,~Rx=Ra^{\ast},$$
then $x$ is called a core inverse of $a$ and if such an element $x$ exists, then it is unique and denoted by $a^{\tiny{\textcircled{\tiny\#}}}$. The set of all core invertible elements in $R$ will be denoted by $R^{\tiny{\textcircled{\tiny\#}}}$.
An element $p\in R$ is called self-adjoint idempotent if $p^{2}=p=p^{\ast}$.
An element $q\in R$ is called idempotent if $q^{2}=q$.

For $a,b\in R$, we have the following definitions:
\begin{itemize}
\item[{\rm $\bullet$}] the star partial order $a\overset{\ast}\leq b$: $a^{\ast}a=a^{\ast}b$ and $aa^{\ast}=ba^{\ast}$\cite{D};
\item[{\rm $\bullet$}] the minus partial order $a\overset{-}\leq b$ if and only if there exists an $a^{-}\in a\{1\}$ such that $a^{-}a=a^{-}b$ and $aa^{-}=ba^{-}$\cite{H2};
\item[{\rm $\bullet$}] the sharp partial order $a\overset{\#}\leq b$: $a^{\#}a=a^\#b$ and $aa^{\#}=ba^{\#}$\cite{M}.
\end{itemize}
This paper is organized as follows. In section 2, some new equivalent characterizations of the core partial order in rings are obtained.
Specially, the reverse order of two core invertible elements in rings was given. In section 3, some relationships of the core partial order and other
partial orders are obtained.

\section{ Equivalent conditions and properties of core partial order }\label{a}
In this section, some new characterizations of the core partial order in rings are obtained. Let us start this section with two auxiliary lemmas.
These two lemmas can be found in \cite[Lemma 2.2]{M} and \cite[Lemma 2.3 and Theorem 2.6]{RD}.
\begin{lem} \label{lemma-partial1}
Let $a\in R^\#$ and $b\in R$. Then:
\begin{itemize}
\item[{\rm (1)}] $a^\#a=a^\#b$ if and only if $a^{2}=ab$;
\item[{\rm (2)}] $aa^\#=ba^\#$ if and only if $a^{2}=ba$;
\item[{\rm (3)}] $a\overset{\#}\leq b$ if and only if $a^{2}=ab=ba$;
\item[{\rm (4)}] $a\overset{\#}\leq b$ if and only if there exists idempotent $p\in R$ such that $a=pb=bp$.
\end{itemize}
\end{lem}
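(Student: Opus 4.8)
The plan is to carry every part of the lemma on the back of a single auxiliary idempotent: since $a\in R^\#$, set $e:=aa^\#=a^\#a$. From the three defining equations of the group inverse one checks immediately that $e^2=e$ together with the absorption rules $ea=ae=a$ and $ea^\#=a^\#e=a^\#$. These four identities are the only facts about $a^\#$ I will need, and each claim then collapses to a one-line computation. The overall strategy is: prove (1) and (2) by suitable left/right multiplications; deduce (3) at once from the definition of $\overset{\#}\leq$; and settle (4) by taking $p=e$ for one direction and recovering the absorption relations for the other.

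For part (1), in the forward direction I multiply $a^\#a=a^\#b$ on the left by $a^2$; since $a^2a^\#=a(aa^\#)=ae=a$, the left side becomes $a^2$ and the right side $ab$, giving $a^2=ab$. Conversely I multiply $a^2=ab$ on the left by $(a^\#)^2$: using $(a^\#)^2a^2=a^\#(a^\#a)a=a^\#ea=a^\#a=e$ and $(a^\#)^2ab=a^\#(a^\#a)b=a^\#eb=a^\#b$, I recover $a^\#a=e=a^\#b$. Part (2) is the exact mirror: multiply on the \emph{right} by $a^2$ in the forward direction and by $(a^\#)^2$ in the backward direction, invoking the right-hand absorption rules in place of the left-hand ones.

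Part (3) is then immediate, since by definition $a\overset{\#}\leq b$ means simultaneously $a^\#a=a^\#b$ and $aa^\#=ba^\#$; by (1) and (2) these are equivalent to $a^2=ab$ and $a^2=ba$, i.e.\ to $a^2=ab=ba$.

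For part (4), in the forward direction I take $p=e$, which is idempotent; from $a^2=ab$ (resp.\ $a^2=ba$) of part (3), left (resp.\ right) multiplication by $a^\#$ yields $ea=eb$ (resp.\ $ae=be$), so that $a=eb=pb$ and $a=be=bp$. The converse contains the only genuinely non-obvious step: given an idempotent $p$ with $a=pb=bp$, I must first extract $pa=a$ and $ap=a$ by squaring, namely $pa=p(pb)=p^2b=pb=a$ and symmetrically $ap=a$; only then can I write $a^2=a(pb)=(ap)b=ab$ and $a^2=(bp)a=b(pa)=ba$, whence part (3) gives $a\overset{\#}\leq b$. I expect this preliminary derivation of $ap=pa=a$ to be the main obstacle, as it is the single place where the idempotency of $p$ must be used before the characterisation of part (3) can be applied.
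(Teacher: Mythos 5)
Your proof is correct in every part: the four absorption identities for $e=aa^{\#}=a^{\#}a$ do hold, the left/right multiplications by $a^{2}$ and $(a^{\#})^{2}$ give exactly the claimed equivalences in (1) and (2), part (3) follows formally, and in (4) your extraction of $pa=a$ and $ap=a$ from idempotency of $p$ before invoking (3) is precisely the step that makes the converse work. Note, however, that there is nothing in the paper to compare against: the authors do not prove this lemma at all, but import it verbatim, citing Mitra's paper on the sharp order (\cite[Lemma 2.2]{M}) and Raki\'{c}--Djordjevi\'{c} (\cite{RD}). So your argument is not an alternative route to the paper's proof --- it is a self-contained replacement for a proof the paper omits. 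Your strategy of funneling everything through the single idempotent $e$ is a clean and standard way to organize the computations (it is essentially how Mitra's original argument runs as well, where the spectral idempotent $aa^{\#}$ governs the sharp order), and it has the pedagogical advantage of isolating the only four facts about the group inverse that are ever used.
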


\begin{lem} \label{lemma-partial2}
Let $a\in R^{\tiny{\textcircled{\tiny\#}}}$ and $b\in R$. Then:
\begin{itemize}
\item[{\rm (1)}] $a^{\tiny{\textcircled{\tiny\#}}} a=a^{\tiny{\textcircled{\tiny\#}}}b$ if and only if $a^{*}a=a^{*}b$;
\item[{\rm (2)}] $aa^{\tiny{\textcircled{\tiny\#}}}=ba^{\tiny{\textcircled{\tiny\#}}}$ if and only if $a^{2}=ba$ if and only if $aa^{\#}=ba^{\#}$.
\end{itemize}
\end{lem}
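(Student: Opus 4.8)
The plan is to reduce the whole statement to the two defining ideal equalities of the core inverse, namely $a^{\tiny{\textcircled{\tiny\#}}}R=aR$ and $Ra^{\tiny{\textcircled{\tiny\#}}}=Ra^{*}$, together with Lemma \ref{lemma-partial1}(2). The first observation is that each of the equalities appearing in the statement can be rewritten as an annihilation condition on the single element $c:=a-b$: the condition $a^{\tiny{\textcircled{\tiny\#}}}a=a^{\tiny{\textcircled{\tiny\#}}}b$ reads $a^{\tiny{\textcircled{\tiny\#}}}c=0$, the condition $a^{*}a=a^{*}b$ reads $a^{*}c=0$, the condition $aa^{\tiny{\textcircled{\tiny\#}}}=ba^{\tiny{\textcircled{\tiny\#}}}$ reads $ca^{\tiny{\textcircled{\tiny\#}}}=0$, and $a^{2}=ba$ reads $ca=0$. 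Once this dictionary is in place each part becomes a one-line annihilator chase.

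For part (1) I would invoke $Ra^{\tiny{\textcircled{\tiny\#}}}=Ra^{*}$. This equality of left ideals produces elements $s,t\in R$ with $a^{\tiny{\textcircled{\tiny\#}}}=sa^{*}$ and $a^{*}=ta^{\tiny{\textcircled{\tiny\#}}}$. Consequently the two conditions $a^{\tiny{\textcircled{\tiny\#}}}c=0$ and $a^{*}c=0$ are equivalent: if $a^{*}c=0$ then $a^{\tiny{\textcircled{\tiny\#}}}c=sa^{*}c=0$, and conversely if $a^{\tiny{\textcircled{\tiny\#}}}c=0$ then $a^{*}c=ta^{\tiny{\textcircled{\tiny\#}}}c=0$. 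This settles (1).

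For part (2) I would use the companion relation $a^{\tiny{\textcircled{\tiny\#}}}R=aR$, which produces $w,z\in R$ with $a^{\tiny{\textcircled{\tiny\#}}}=aw$ and $a=a^{\tiny{\textcircled{\tiny\#}}}z$. Multiplying on the left by $c$ exactly as above yields $ca^{\tiny{\textcircled{\tiny\#}}}=0\Leftrightarrow ca=0$, that is, $aa^{\tiny{\textcircled{\tiny\#}}}=ba^{\tiny{\textcircled{\tiny\#}}}\Leftrightarrow a^{2}=ba$. The remaining equivalence $a^{2}=ba\Leftrightarrow aa^{\#}=ba^{\#}$ is precisely Lemma \ref{lemma-partial1}(2), which applies here because every core invertible element is group invertible, so $a\in R^{\#}$.

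The individual steps are short, so there is no genuine obstacle beyond bookkeeping; the only point needing care is invoking the correct defining ideal relation on the correct side, namely $Ra^{\tiny{\textcircled{\tiny\#}}}=Ra^{*}$ for the factor standing on the left in part (1) and $a^{\tiny{\textcircled{\tiny\#}}}R=aR$ for the factor standing on the right in part (2). The substantive inputs are thus that core invertibility simultaneously supplies both ideal equalities from the definition and membership in $R^{\#}$, the latter being exactly what licenses the appeal to Lemma \ref{lemma-partial1}(2).
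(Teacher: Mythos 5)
Your proof is correct. There is, however, nothing in the paper to compare it against: this lemma is one of the two auxiliary results quoted at the start of Section 2 and is justified there only by the citation \cite[Lemma 2.3 and Theorem 2.6]{RD}, so what you have produced is a self-contained replacement for an outsourced proof. Your mechanism --- rewriting each of the four equalities as the statement that $c=a-b$ is annihilated on the appropriate side, and then transporting that annihilation across the defining ideal equalities $Ra^{\tiny{\textcircled{\tiny\#}}}=Ra^{*}$ (for part (1)) and $a^{\tiny{\textcircled{\tiny\#}}}R=aR$ (for part (2)) --- is exactly why the lemma is true, and your implicit use of unitality (so that $a^{\tiny{\textcircled{\tiny\#}}}=1\cdot a^{\tiny{\textcircled{\tiny\#}}}\in Ra^{\tiny{\textcircled{\tiny\#}}}$, $a^{*}\in Ra^{*}$, and so on) is legitimate, since $R$ is assumed unital throughout the paper. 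The only ingredient you quote without proof is that $a\in R^{\tiny{\textcircled{\tiny\#}}}$ implies $a\in R^{\#}$, which you need in order to apply Lemma \ref{lemma-partial1}(2); this is a standard fact from \cite{RDD} (it also follows from the equations of Lemma \ref{five-equations-yy}: $a=xa^{2}\in Ra^{2}$ and $a=axa=a(ax^{2})a=a^{2}x^{2}a\in a^{2}R$ together force $a\in R^{\#}$), and it is in any case presupposed by the statement itself, since $a^{\#}$ occurs in part (2). If you wanted to avoid citing Lemma \ref{lemma-partial1}(2) altogether, you could run your annihilator argument a third time on the ideal equality $a^{\#}R=aR$ (valid because $a^{\#}=a(a^{\#})^{2}$ and $a=a^{\#}a^{2}$), which would make the whole proof uniform across all three equivalences.
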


We will use the following notations $aR=\{ax\mid x\in R\}$, $Ra=\{xa\mid x\in R\}$, $^{\circ}a=\{x\in R\mid xa=0\}$ and $a^{\circ}=\{x\in R\mid ax=0\}$.

In \cite[Lemma 8]{LPT},  Lebtahi et al. proved that $a\overset{-}\leq b$ if and only if there exists
$c\in a\{1,2\}$ such that $b-a\in$ $^{\circ}c\cap c^{\circ}$.
For the core partial order, we have the following result.

\begin{thm} \label{core-pr1}
Let $a\in R^{\tiny{\textcircled{\tiny\#}}}$ and $b\in R$. Then the following conditions are equivalent:
\begin{itemize}
\item[{\rm (1)}] $a\overset{\tiny{\textcircled{\tiny\#}}}\leq b$;
\item[{\rm (2)}] $ba^{\tiny{\textcircled{\tiny\#}}}b=a$ and $a^{\tiny{\textcircled{\tiny\#}}}ba^{\tiny{\textcircled{\tiny\#}}}=a^{\tiny{\textcircled{\tiny\#}}}$;
\item[{\rm (3)}] $aa^{\tiny{\textcircled{\tiny\#}}}b=a=ba^{\tiny{\textcircled{\tiny\#}}}a$;
\item[{\rm (4)}] $b-a\in ^{\circ}\!\!a\cap (a^{\ast})^{\circ}$;
\item[{\rm (5)}] $b-a\in(1-aa^{\tiny{\textcircled{\tiny\#}}})R\cap R(1-aa^{\tiny{\textcircled{\tiny\#}}})$;
\item[{\rm (6)}] $b-a\in ^{\circ}\!\!(aa^{\tiny{\textcircled{\tiny\#}}}) \cap (aa^{\tiny{\textcircled{\tiny\#}}})^{\circ}$.
\end{itemize}
\end{thm}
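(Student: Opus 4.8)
The plan is to treat $(1)$, $(4)$, $(5)$, $(6)$ as reformulations of the same pair of one-sided conditions, to read off $(3)$ in the same way, and to isolate $(1)\Leftrightarrow(2)$ as the one genuinely computational step. Throughout I will abbreviate $p=aa^{\tiny{\textcircled{\tiny\#}}}$ and $q=a^{\tiny{\textcircled{\tiny\#}}}a$ and use the standard identities attached to the core inverse: $p=p^{2}=p^{\ast}$, $q=q^{2}$, $pa=a$, $aq=qa=a$, $a^{\tiny{\textcircled{\tiny\#}}}aa^{\tiny{\textcircled{\tiny\#}}}=a^{\tiny{\textcircled{\tiny\#}}}$ (hence also $a^{\tiny{\textcircled{\tiny\#}}}p=a^{\tiny{\textcircled{\tiny\#}}}$) and $Ra^{\tiny{\textcircled{\tiny\#}}}=Ra^{\ast}$. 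I will also use repeatedly that for an idempotent $e$ one has $(1-e)R=\{y: ey=0\}$ and $R(1-e)=\{y: ye=0\}$.

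First I would dispatch $(1)\Leftrightarrow(4)$ directly from Lemma~\ref{lemma-partial2}: part $(1)$ of that lemma rewrites $a^{\tiny{\textcircled{\tiny\#}}}a=a^{\tiny{\textcircled{\tiny\#}}}b$ as $a^{\ast}(b-a)=0$, i.e. $b-a\in(a^{\ast})^{\circ}$, while part $(2)$ rewrites $aa^{\tiny{\textcircled{\tiny\#}}}=ba^{\tiny{\textcircled{\tiny\#}}}$ as $a^{2}=ba$, i.e. $(b-a)a=0$, i.e. $b-a\in{}^{\circ}a$. Next, since $p$ is idempotent, $(5)$ and $(6)$ both amount to the single pair $p(b-a)=0$ and $(b-a)p=0$, so $(5)\Leftrightarrow(6)$ needs nothing. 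To match these with $(4)$ I would verify two annihilator identifications: $(b-a)a=0\Leftrightarrow(b-a)p=0$, which follows from $pa=a$ and $(b-a)p=(b-a)a\,a^{\tiny{\textcircled{\tiny\#}}}$; and $a^{\ast}(b-a)=0\Leftrightarrow p(b-a)=0$, which follows by combining Lemma~\ref{lemma-partial2}(1) (giving $a^{\ast}(b-a)=0\Leftrightarrow a^{\tiny{\textcircled{\tiny\#}}}(b-a)=0$) with the equivalence $a^{\tiny{\textcircled{\tiny\#}}}(b-a)=0\Leftrightarrow p(b-a)=0$ coming from $a^{\tiny{\textcircled{\tiny\#}}}aa^{\tiny{\textcircled{\tiny\#}}}=a^{\tiny{\textcircled{\tiny\#}}}$. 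The equivalence $(3)\Leftrightarrow(4)$ is of the same type: because $pa=a$, the equation $aa^{\tiny{\textcircled{\tiny\#}}}b=a$ says exactly $p(b-a)=0$; because $aq=qa=a$, the equation $ba^{\tiny{\textcircled{\tiny\#}}}a=a$ says $(b-a)q=0$, which is equivalent to $(b-a)a=0$.

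The crux is $(1)\Leftrightarrow(2)$. For $(1)\Rightarrow(2)$ I would use the second defining equation $ba^{\tiny{\textcircled{\tiny\#}}}=aa^{\tiny{\textcircled{\tiny\#}}}$ to get $a^{\tiny{\textcircled{\tiny\#}}}ba^{\tiny{\textcircled{\tiny\#}}}=a^{\tiny{\textcircled{\tiny\#}}}aa^{\tiny{\textcircled{\tiny\#}}}=a^{\tiny{\textcircled{\tiny\#}}}$ and $ba^{\tiny{\textcircled{\tiny\#}}}b=aa^{\tiny{\textcircled{\tiny\#}}}b$, and then the first equation $a^{\tiny{\textcircled{\tiny\#}}}b=a^{\tiny{\textcircled{\tiny\#}}}a$ to simplify $aa^{\tiny{\textcircled{\tiny\#}}}b=a\,a^{\tiny{\textcircled{\tiny\#}}}a=a$. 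The converse is the step I expect to be the main obstacle, since from $(2)$ I must reconstruct both order equations. The key device is to set $c=ba^{\tiny{\textcircled{\tiny\#}}}$ and compute $c^{2}=ba^{\tiny{\textcircled{\tiny\#}}}ba^{\tiny{\textcircled{\tiny\#}}}$ in two ways: grouping as $(ba^{\tiny{\textcircled{\tiny\#}}}b)a^{\tiny{\textcircled{\tiny\#}}}$ and using $ba^{\tiny{\textcircled{\tiny\#}}}b=a$ gives $c^{2}=aa^{\tiny{\textcircled{\tiny\#}}}=p$, while grouping as $b(a^{\tiny{\textcircled{\tiny\#}}}ba^{\tiny{\textcircled{\tiny\#}}})$ and using $a^{\tiny{\textcircled{\tiny\#}}}ba^{\tiny{\textcircled{\tiny\#}}}=a^{\tiny{\textcircled{\tiny\#}}}$ gives $c^{2}=ba^{\tiny{\textcircled{\tiny\#}}}=c$. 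Hence $c=c^{2}=p$, that is $ba^{\tiny{\textcircled{\tiny\#}}}=aa^{\tiny{\textcircled{\tiny\#}}}$, the second order equation. For the first, left-multiplying $ba^{\tiny{\textcircled{\tiny\#}}}b=a$ by $a^{\tiny{\textcircled{\tiny\#}}}$ and using $a^{\tiny{\textcircled{\tiny\#}}}ba^{\tiny{\textcircled{\tiny\#}}}=a^{\tiny{\textcircled{\tiny\#}}}$ yields $a^{\tiny{\textcircled{\tiny\#}}}b=a^{\tiny{\textcircled{\tiny\#}}}a$. Together these give $(1)$, closing the chain of equivalences. The only thing to watch is that the standard core-inverse identities listed at the outset are genuinely available for $a\in R^{\tiny{\textcircled{\tiny\#}}}$; these are recorded in the references cited before the statement, so no extra work is needed there.
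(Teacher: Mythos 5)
Your proof is correct and follows essentially the same route as the paper: $(1)\Leftrightarrow(2)$ is the same computation (your two evaluations of $c^{2}$ with $c=ba^{\tiny{\textcircled{\tiny\#}}}$ are exactly the paper's chain $aa^{\tiny{\textcircled{\tiny\#}}}=ba^{\tiny{\textcircled{\tiny\#}}}ba^{\tiny{\textcircled{\tiny\#}}}=ba^{\tiny{\textcircled{\tiny\#}}}$), $(1)\Leftrightarrow(4)$ invokes Lemma \ref{lemma-partial2} just as the paper does, and your reduction of $(5)$ and $(6)$ to the pair $p(b-a)=0$, $(b-a)p=0$ is the paper's identification ${}^{\circ}a=R(1-aa^{\tiny{\textcircled{\tiny\#}}})$, $(a^{\ast})^{\circ}=(1-aa^{\tiny{\textcircled{\tiny\#}}})R$ in elementwise form. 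The only differences are cosmetic: you wire $(3)$ to $(4)$ where the paper proves $(1)\Leftrightarrow(3)$ directly, and your step $(b-a)a=0\Rightarrow(b-a)a^{\tiny{\textcircled{\tiny\#}}}a=0$ silently uses $a^{\tiny{\textcircled{\tiny\#}}}\in aR$ (equivalently, Lemma \ref{lemma-partial2}(2)), which is indeed available from the definition of the core inverse.
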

\begin{proof}
$(1)\Leftrightarrow(2)$ Suppose that $a\overset{\tiny{\textcircled{\tiny\#}}}\leq b$. Then
$ba^{\tiny{\textcircled{\tiny\#}}}b
       =aa^{\tiny{\textcircled{\tiny\#}}}b
       =aa^{\tiny{\textcircled{\tiny\#}}}a=a$ and
        $a^{\tiny{\textcircled{\tiny\#}}}ba^{\tiny{\textcircled{\tiny\#}}}
       =a^{\tiny{\textcircled{\tiny\#}}}aa^{\tiny{\textcircled{\tiny\#}}}
       =a^{\tiny{\textcircled{\tiny\#}}}.$
Conversely, if $ba^{\tiny{\textcircled{\tiny\#}}}b=a$ and
$a^{\tiny{\textcircled{\tiny\#}}}ba^{\tiny{\textcircled{\tiny\#}}}=a^{\tiny{\textcircled{\tiny\#}}}$, then $aa^{\tiny{\textcircled{\tiny\#}}}
          =ba^{\tiny{\textcircled{\tiny\#}}}ba^{\tiny{\textcircled{\tiny\#}}}
          =ba^{\tiny{\textcircled{\tiny\#}}}$
          and
          $a^{\tiny{\textcircled{\tiny\#}}}a
          =a^{\tiny{\textcircled{\tiny\#}}}ba^{\tiny{\textcircled{\tiny\#}}}b
          =a^{\tiny{\textcircled{\tiny\#}}}b.$

$(1)\Leftrightarrow(3)$ Suppose that $a\overset{\tiny{\textcircled{\tiny\#}}}\leq b$. Then
$a^{\tiny{\textcircled{\tiny\#}}}a=a^{\tiny{\textcircled{\tiny\#}}}b$ and
$aa^{\tiny{\textcircled{\tiny\#}}}=ba^{\tiny{\textcircled{\tiny\#}}}$. Thus
$aa^{\tiny{\textcircled{\tiny\#}}}b=aa^{\tiny{\textcircled{\tiny\#}}}a=a$
and $ba^{\tiny{\textcircled{\tiny\#}}}a=aa^{\tiny{\textcircled{\tiny\#}}}a=a$.
Conversely, if $aa^{\tiny{\textcircled{\tiny\#}}}b=a=ba^{\tiny{\textcircled{\tiny\#}}}a$,
then pre-multiplication by $a^{\tiny{\textcircled{\tiny\#}}}$ on $aa^{\tiny{\textcircled{\tiny\#}}}b=a$ yields
$a^{\tiny{\textcircled{\tiny\#}}}b=a^{\tiny{\textcircled{\tiny\#}}}a$, similarly we have
$ba^{\tiny{\textcircled{\tiny\#}}}=aa^{\tiny{\textcircled{\tiny\#}}}$, thus
$a\overset{\tiny{\textcircled{\tiny\#}}}\leq b$.

$(1)\Leftrightarrow(4)$ Since $b-a\in ^{\circ}\!\!a\cap (a^{\ast})^{\circ}$ is equivalent to both $a^{*}a=a^{*}b$ and $a^{2}=ba$ hold,
thus $(1)\Leftrightarrow(4)$ by Lemma \ref{lemma-partial2}.

$(4)\Leftrightarrow(5)$ By $a\in R^{\tiny{\textcircled{\tiny\#}}}$, we have $^{\circ}a=R(1-aa^{\tiny{\textcircled{\tiny\#}}})$ and
 $(a^{\ast})^{\circ}=(1-(a^{\tiny{\textcircled{\tiny\#}}})^{\ast}a^{\ast})R
                    =(1-(aa^{\tiny{\textcircled{\tiny\#}}})^{\ast})R
                    =(1-aa^{\tiny{\textcircled{\tiny\#}}})R.$

$(5)\Leftrightarrow(6)$ By $(aa^{\tiny{\textcircled{\tiny\#}}})^{2}=aa^{\tiny{\textcircled{\tiny\#}}}$, we have 
$(1-aa^{\tiny{\textcircled{\tiny\#}}})R=(aa^{\tiny{\textcircled{\tiny\#}}})^{\circ}$ and
$R(1-aa^{\tiny{\textcircled{\tiny\#}}})=^{\circ}\!(aa^{\tiny{\textcircled{\tiny\#}}}).$
\end{proof}

If $p,~q\in R$ are idempotents, then arbitrary $a\in R$ can be written as
$$a=paq+pa(1-q)+(1-p)aq+(1-p)a(1-q).$$
The corresponding matrix form is
$$a= \left[
            \begin{matrix}
              a_{11} & a_{12} \\
              a_{21} & a_{22}
            \end{matrix}
         \right]_{p\times q},$$
where $a_{11}=paq$, $a_{12}=pa(1-q)$, $a_{21}=(1-p)aq$ and $a_{22}=(1-p)a(1-q)$. If $a=(a_{ij})_{p\times q}$ and $b=(b_{ij})_{p\times q}$, then $a+b=(a_{ij}+b_{ij})_{p\times q}$.

In \cite[Theorem 2.6]{RD}, Raki\'{c} and Djordjevi\'{c} proved that $a\overset{\tiny{\textcircled{\tiny\#}}}\leq b$ if and only if
there exist self-adjoint idempotent $p\in R$ and idempotent $q\in R$ such that $a=pb=bq$ and $qa=a$.
We now provide some new characterizations for the core partial order in terms of self-adjoint idempotents.

\begin{thm} \label{core-pr2}
Let $a\in R^{\tiny{\textcircled{\tiny\#}}}$ and $b\in R$. Then the following conditions are equivalent:
\begin{itemize}
\item[{\rm (1)}] $a\overset{\tiny{\textcircled{\tiny\#}}}\leq b$;
\item[{\rm (2)}] There exists a self-adjoint idempotent $p\in R$ such that $a=pb$, $ap=bp$ and $aR=pR$;
\item[{\rm (3)}] There exists self-adjoint idempotent $p\in R$ such that $a=pb$, $ap=bp$;
\item[{\rm (4)}] $a= \left(
            \begin{smallmatrix}
              a_{1} & a_{2} \\
                    &       \\
              0 & 0
            \end{smallmatrix}
         \right)_{p\times p},~~
        b= \left(
            \begin{smallmatrix}
              a_{1} & a_{2} \\
                    &       \\
               0    & b_{4}
            \end{smallmatrix}
         \right)_{p\times p}$.
\end{itemize}
\end{thm}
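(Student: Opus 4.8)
The plan is to establish the cycle $(1)\Rightarrow(2)\Rightarrow(3)\Rightarrow(4)\Rightarrow(1)$, using throughout that $p:=aa^{\tiny{\textcircled{\tiny\#}}}$ is a self-adjoint idempotent, exactly the fact already exploited in the proof of Theorem \ref{core-pr1}. For $(1)\Rightarrow(2)$ I would take this $p$. Condition (3) of Theorem \ref{core-pr1} gives $pb=aa^{\tiny{\textcircled{\tiny\#}}}b=a$; the core partial order together with Lemma \ref{lemma-partial2}(2) yields $a^{2}=ba$, so that $bp=b\,aa^{\tiny{\textcircled{\tiny\#}}}=a^{2}a^{\tiny{\textcircled{\tiny\#}}}=ap$; and $pR=aR$ follows from $pa=aa^{\tiny{\textcircled{\tiny\#}}}a=a$ together with the trivial inclusion $aa^{\tiny{\textcircled{\tiny\#}}}R\subseteq aR$. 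Thus $p$ witnesses (2).

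The step $(2)\Rightarrow(3)$ is immediate, since (3) is just (2) with the condition $aR=pR$ deleted. For $(3)\Rightarrow(4)$ I would compute the matrix forms of $a$ and $b$ relative to the self-adjoint idempotent $p$. From $a=pb$ one gets $pa=p^{2}b=pb=a$, hence $(1-p)a=0$, so the bottom row of $a$ vanishes and $a=\left(\begin{smallmatrix} a_{1} & a_{2}\\ 0 & 0\end{smallmatrix}\right)_{p\times p}$ with $a_{1}=pap$ and $a_{2}=pa(1-p)$. Using $pa=a$ and $bp=ap$ one then finds $pbp=p(bp)=p(ap)=ap=a_{1}$, $pb(1-p)=a(1-p)=a_{2}$, and $(1-p)bp=(1-p)(ap)=(1-p)a\cdot p=0$, which places $b$ in the asserted form with $b_{4}=(1-p)b(1-p)$.

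Finally, $(4)\Rightarrow(1)$ is the heart of the matter and the step I expect to be the main obstacle. From the matrix forms I would read back off the identities $pb=a$, $ap=bp$ and $pa=a$. Right-multiplying $ap=bp$ by $b$ and using $pb=a$ gives $a^{2}=apb=bpb=ba$. Moreover $a^{*}=(pb)^{*}=b^{*}p$, so $a^{*}b=b^{*}pb=b^{*}a$ while $a^{*}a=b^{*}pa=b^{*}a$, whence $a^{*}b=a^{*}a$. These are precisely the conditions $(b-a)a=0$ and $a^{*}(b-a)=0$, i.e. $b-a\in{}^{\circ}a\cap(a^{*})^{\circ}$, so (1) follows from Theorem \ref{core-pr1}(4). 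The only real difficulty is recognising that the idempotent data should be converted into the orthogonality membership of Theorem \ref{core-pr1}(4) rather than attacked through $a^{\tiny{\textcircled{\tiny\#}}}$ directly; once this reduction is chosen, every step is a one-line manipulation, and the whole argument never needs more than the defining relations of $p$ and the auxiliary lemmas.
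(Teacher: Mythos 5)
Your proposal is correct and takes essentially the same approach as the paper: the same choice $p=aa^{\tiny{\textcircled{\tiny\#}}}$ for $(1)\Rightarrow(2)$, the same entry-by-entry computation for $(3)\Rightarrow(4)$, and the same key identities $a^{2}=ba$ and $a^{*}a=a^{*}b$ extracted from $pb=a$, $ap=bp$. The only difference is organizational: you close a single cycle with $(4)\Rightarrow(1)$, which is just the concatenation of the paper's $(4)\Rightarrow(3)$ and $(3)\Rightarrow(1)$ steps, and your appeal to Theorem \ref{core-pr1}(4) is interchangeable with the paper's direct use of Lemma \ref{lemma-partial2}.
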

\begin{proof}
$(1)\Rightarrow(2)$ Let $p=aa^{\tiny{\textcircled{\tiny\#}}}$, then $p^{2}=p=p^{\ast}$ and
    $pb=aa^{\tiny{\textcircled{\tiny\#}}}b=aa^{\tiny{\textcircled{\tiny\#}}}a=a$,
       $ap
       =a^{2}a^{\tiny{\textcircled{\tiny\#}}}=aa^{\tiny{\textcircled{\tiny\#}}}a^{2}a^{\tiny{\textcircled{\tiny\#}}}
       =ba^{\tiny{\textcircled{\tiny\#}}}a^{2}a^{\tiny{\textcircled{\tiny\#}}}
       =baa^{\tiny{\textcircled{\tiny\#}}}
       =bp$,
$aR=pR$ by $a=aa^{\tiny{\textcircled{\tiny\#}}}a=pa$.

$(2)\Rightarrow(3)$ It is trivial.

$(3)\Rightarrow(1)$ Suppose that $a=pb$ and $ap=bp$. Then $a^{2}=apb=bpb=ba$
and $a^{*}a=(pb)^{*}pb=b^{*}p^{*}pb=b^{*}p^{\ast}b=(pb)^{*}b=a^{\ast}b$, thus $a\overset{\tiny{\textcircled{\tiny\#}}}\leq b$ by Lemma \ref{lemma-partial2}.

$(3)\Rightarrow(4)$ Suppose that $a=pb$ and $ap=bp$. Then $pa=a$ and
$$\begin{array}{rcl}
 pap=ap=a_{1},       &~~~~ & pa(1-p)=a-ap=a_{2},\\
(1-p)ap=0,           &~~~~ & (1-p)a(1-p)=0.\\
 pbp=ap=a_{1},       &~~~~ & pb(1-p)=a-ap=a_{2},\\
(1-p)bp=ap-ap=0,     &~~~~ & (1-p)b(1-p)=b-a=b_{4}.
\end{array}$$
Thus $a= \left(
            \begin{smallmatrix}
              a_{1} & a_{2}\\
                    &      \\
              0 & 0
            \end{smallmatrix}
         \right)_{p\times p},~~
        b= \left(
            \begin{smallmatrix}
              a_{1} & a_{2} \\
                    &       \\
               0    & b_{4}
            \end{smallmatrix}
         \right)_{p\times p}$.\\
$(4)\Rightarrow(3)$  If there exists $p^{2}=p=p^{\ast}$ such that $pa=a$, $a_{1}=ap$, $a_{2}=a-ap$, $b_{4}=b-a$, then
    $$pb=\left(
            \begin{smallmatrix}
              p & 0\\
                    &      \\
              0 & 0
            \end{smallmatrix}
         \right)_{p\times p}
     \left(
            \begin{smallmatrix}
              a_{1} & a_{2} \\
                    &      \\
               0    & b_{4}
            \end{smallmatrix}
         \right)_{p\times p}
   =\left(
            \begin{smallmatrix}
              pa_{1} & pa_{2} \\
                    &      \\
               0    & 0
            \end{smallmatrix}
         \right)_{p\times p}
   =\left(
            \begin{smallmatrix}
              a_{1} & a_{2} \\
                    &      \\
               0    & 0
            \end{smallmatrix}
         \right)_{p\times p}
   =a,$$

$$ap=\left(
            \begin{smallmatrix}
              a_{1} & a_{2} \\
                    &      \\
               0    & 0
            \end{smallmatrix}
         \right)_{p\times p}
  \left(
            \begin{smallmatrix}
                p   &   0 \\
                    &      \\
                0   &   0
            \end{smallmatrix}
         \right)_{p\times p}
 =\left(
            \begin{smallmatrix}
              a_{1}p & 0 \\
                    &      \\
               0    &  0
            \end{smallmatrix}
         \right)_{p\times p}
 =\left(
            \begin{smallmatrix}
              a_{1} &  0 \\
                    &      \\
               0    &  0
            \end{smallmatrix}
         \right)_{p\times p},$$
$$bp=\left(
            \begin{smallmatrix}
              a_{1} & a_{2} \\
                    &      \\
               0    & b_{4}
            \end{smallmatrix}
         \right)_{p\times p}
    \left(
            \begin{smallmatrix}
               p    & 0 \\
                    &      \\
               0    & 0
            \end{smallmatrix}
         \right)_{p\times p}
  =\left(
            \begin{smallmatrix}
              a_{1}p &  0\\
                    &      \\
               0    &   0
            \end{smallmatrix}
         \right)_{p\times p}
 =\left(
            \begin{smallmatrix}
              a_{1} &  0 \\
                    &      \\
               0    &  0
            \end{smallmatrix}
         \right)_{p\times p}.$$
Hence, $pb=a,~ ap=bp$.
\end{proof}

The following characterizations of the minus partial order will be used in the proof of Theorem 2.6,
which plays an important role in the sequel.
\begin{lem} \emph{\cite[Lemma 3.4]{Rb}} \label{minus-regular}
Let $a,~b\in R^{-}$. The following conditions are equivalent :
\begin{itemize}
\item[{\rm (1)}] $a\overset{-}\leq b$;
\item[{\rm (2)}] There exists $b^{-}\in b\{1\}$ such that $a=bb^{-}a=ab^{-}b=ab^{-}a$;
\item[{\rm (3)}] For arbitrary $b^{-}\in b\{1\}$, we have $a=bb^{-}a=ab^{-}b=ab^{-}a$.
\end{itemize}
\end{lem}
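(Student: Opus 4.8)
The goal is to prove the equivalence of three conditions for the minus partial order on regular elements: $a\overset{-}\leq b$; the existence of some $b^-\in b\{1\}$ with $a=bb^-a=ab^-b=ab^-a$; and the statement that every $b^-\in b\{1\}$ satisfies these three identities. I would organize the proof as a cycle $(1)\Rightarrow(3)\Rightarrow(2)\Rightarrow(1)$, since $(3)\Rightarrow(2)$ is immediate once one knows $b\{1\}$ is nonempty (which holds because $b\in R^-$), and the substantive work splits cleanly into the remaining two implications.

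For $(1)\Rightarrow(3)$, I would unwind the definition of the minus partial order: $a\overset{-}\leq b$ means there is an inner inverse $a^-\in a\{1\}$ with $a^-a=a^-b$ and $aa^-=ba^-$. The plan is to extract the idempotents $e=aa^-$ and $f=a^-a$, observe that $eb=ea$ and $bf=af$ follow from the defining relations, and then show that for an \emph{arbitrary} $b^-\in b\{1\}$ the three products collapse to $a$. The key trick is to pass information between the two inner inverses: for instance, from $aa^-=ba^-$ one gets $a=aa^-a=ba^-a$, and one must leverage $bb^-b=b$ to rewrite $bb^-a$. I expect this implication to require careful bookkeeping with the relations $a=eb$-type identities, substituting $bb^-b=b$ at the right moment so that the specific choice of $b^-$ drops out; this is the place where the ``arbitrary $b^-$'' strength of $(3)$ has to be earned.

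For $(2)\Rightarrow(1)$, I would start from a single $b^-$ satisfying $a=bb^-a=ab^-b=ab^-a$ and construct an explicit inner inverse of $a$ witnessing $a\overset{-}\leq b$. The natural candidate is $a^-:=b^-ab^-$ or simply $b^-$ restricted appropriately; the relation $a=ab^-a$ already says $b^-\in a\{1\}$, so one can take $a^-$ built from $b^-$ and verify directly that $a^-a=a^-b$ and $aa^-=ba^-$ using the two remaining identities $a=bb^-a$ and $a=ab^-b$. Concretely, $aa^-=ab^-$ should match $ba^-=bb^-\cdots$ after applying $a=bb^-a$, and symmetrically on the other side.

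The main obstacle will be the $(1)\Rightarrow(3)$ step, because it upgrades from the existence of \emph{one} good inner inverse of $a$ to a statement about \emph{every} inner inverse of $b$; the difficulty is that $a^-$ and $b^-$ are a priori unrelated, so one cannot simply substitute. The resolution I anticipate is to first establish the two ideal containments $aR\subseteq bR$ and $Ra\subseteq Rb$ (equivalently $ab^-b=a$ and $bb^-a=a$ for every $b^-$, which follow from $e,f$ absorbing $b$), and only then deduce $ab^-a=a$ by combining them. Once these containments are phrased invariantly in terms of one-sided ideals, they become independent of the choice of $b^-$, which is exactly what condition $(3)$ demands.
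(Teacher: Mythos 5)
The paper does not prove this lemma at all: it is quoted, with attribution, from \cite[Lemma 3.4]{Rb}, so there is no internal argument to compare yours against. Judged on its own merits, your proposal is correct and the route you sketch does close up. For $(1)\Rightarrow(3)$: from $aa^-=ba^-$ and $a^-a=a^-b$ you get $a=aa^-a=ba^-a$ and $a=aa^-a=aa^-b$, and then for an \emph{arbitrary} $b^-\in b\{1\}$ the choice of $b^-$ drops out exactly as you predict, via
\begin{align*}
bb^-a&=bb^-(ba^-a)=(bb^-b)a^-a=ba^-a=a,\\
ab^-b&=(aa^-b)b^-b=aa^-(bb^-b)=aa^-b=a,\\
ab^-a&=(aa^-b)b^-(ba^-a)=aa^-(bb^-b)a^-a=(aa^-b)a^-a=aa^-a=a,
\end{align*}
which is precisely your plan of writing $a=eb=bf$ with $e=aa^-$, $f=a^-a$ and absorbing $bb^-b=b$; the last line is the ``combination'' step you anticipated, and it needs the specific $f=a^-a$ (so that $af=a$), not merely the containment $a\in bR$. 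For $(2)\Rightarrow(1)$: your primary candidate $a^-:=b^-ab^-$ is the right one, since $a(b^-ab^-)a=(ab^-a)b^-a=a$ puts it in $a\{1\}$, while $aa^-=(ab^-a)b^-=ab^-=(bb^-a)b^-=ba^-$ and $a^-a=b^-(ab^-a)=b^-a=b^-(ab^-b)=a^-b$. One caution on your parenthetical alternative: taking $a^-=b^-$ itself would \emph{not} suffice, because $ab^-=bb^-$ and $b^-a=b^-b$ need not hold; only the symmetrized element $b^-ab^-$ works. With that small remark, your argument is a complete and correct proof of the cited lemma.
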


\begin{thm} \label{core-minus-regular}
Let $a,~b \in R^{\tiny{\textcircled{\tiny\#}}}$ with $a\overset{\tiny{\textcircled{\tiny\#}}}\leq b$. Then:
\begin{itemize}
\item[{\rm (1)}] $ba^{\tiny{\textcircled{\tiny\#}}}=ab^{\tiny{\textcircled{\tiny\#}}}$,
                 $a^{\tiny{\textcircled{\tiny\#}}}b=b^{\tiny{\textcircled{\tiny\#}}}a$;
\item[{\rm (2)}] $b^{\tiny{\textcircled{\tiny\#}}}ba^{\tiny{\textcircled{\tiny\#}}}
                 =a^{\tiny{\textcircled{\tiny\#}}}bb^{\tiny{\textcircled{\tiny\#}}}
                 =a^{\tiny{\textcircled{\tiny\#}}}ba^{\tiny{\textcircled{\tiny\#}}}
                 =a^{\tiny{\textcircled{\tiny\#}}}$;
\item[{\rm (3)}] $b^{\tiny{\textcircled{\tiny\#}}}aa^{\tiny{\textcircled{\tiny\#}}}
                 =a^{\tiny{\textcircled{\tiny\#}}}ab^{\tiny{\textcircled{\tiny\#}}}
                 =b^{\tiny{\textcircled{\tiny\#}}}ab^{\tiny{\textcircled{\tiny\#}}}
                 =a^{\tiny{\textcircled{\tiny\#}}}$.
\end{itemize}
\end{thm}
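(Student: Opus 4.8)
The plan is to reduce the whole statement to the minus partial order and then argue by substitution. Since $aa^{\tiny{\textcircled{\tiny\#}}}a=a$, the core inverse $a^{\tiny{\textcircled{\tiny\#}}}$ is an inner inverse of $a$, and the two equalities $a^{\tiny{\textcircled{\tiny\#}}}a=a^{\tiny{\textcircled{\tiny\#}}}b$, $aa^{\tiny{\textcircled{\tiny\#}}}=ba^{\tiny{\textcircled{\tiny\#}}}$ defining $a\overset{\tiny{\textcircled{\tiny\#}}}\leq b$ say exactly that $a\overset{-}\leq b$. As $a,b\in R^{\tiny{\textcircled{\tiny\#}}}\subseteq R^{-}$, Lemma \ref{minus-regular} applies with the particular inner inverse $b^{-}=b^{\tiny{\textcircled{\tiny\#}}}$, yielding
$$a=bb^{\tiny{\textcircled{\tiny\#}}}a=ab^{\tiny{\textcircled{\tiny\#}}}b=ab^{\tiny{\textcircled{\tiny\#}}}a. \qquad (\star)$$
I would also record the self-adjoint idempotents $e=aa^{\tiny{\textcircled{\tiny\#}}}$, $f=bb^{\tiny{\textcircled{\tiny\#}}}$ and the idempotent $h=b^{\tiny{\textcircled{\tiny\#}}}b$. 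From $a=bb^{\tiny{\textcircled{\tiny\#}}}a=fa$ one gets $fe=(fa)a^{\tiny{\textcircled{\tiny\#}}}=e$, and since $e,f$ are self-adjoint, $ef=(fe)^{\ast}=e$; similarly, using the basic identity $b^{\tiny{\textcircled{\tiny\#}}}b^{2}=b$, one obtains $ha=b^{\tiny{\textcircled{\tiny\#}}}b(bb^{\tiny{\textcircled{\tiny\#}}}a)=(b^{\tiny{\textcircled{\tiny\#}}}b^{2})b^{\tiny{\textcircled{\tiny\#}}}a=bb^{\tiny{\textcircled{\tiny\#}}}a=a$.

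The heart of the argument is part (1). For the first identity, $(\star)$ and Theorem \ref{core-pr1}(3) give $(ab^{\tiny{\textcircled{\tiny\#}}}-aa^{\tiny{\textcircled{\tiny\#}}})b=ab^{\tiny{\textcircled{\tiny\#}}}b-aa^{\tiny{\textcircled{\tiny\#}}}b=a-a=0$; since ${}^{\circ}b=R(1-f)$, right multiplication by $f$ gives $ab^{\tiny{\textcircled{\tiny\#}}}=ab^{\tiny{\textcircled{\tiny\#}}}f=aa^{\tiny{\textcircled{\tiny\#}}}f=ef=e=ba^{\tiny{\textcircled{\tiny\#}}}$, the last equality being the definition of $\overset{\tiny{\textcircled{\tiny\#}}}\leq$. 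For the second identity, Theorem \ref{core-pr1}(3) and $(\star)$ give $b(a^{\tiny{\textcircled{\tiny\#}}}a-b^{\tiny{\textcircled{\tiny\#}}}a)=ba^{\tiny{\textcircled{\tiny\#}}}a-bb^{\tiny{\textcircled{\tiny\#}}}a=a-a=0$; since $b^{\circ}=(1-h)R$, left multiplication by $h$ together with $ha^{\tiny{\textcircled{\tiny\#}}}a=a^{\tiny{\textcircled{\tiny\#}}}a$ (which follows from $ha=a$ and $a^{\tiny{\textcircled{\tiny\#}}}\in aR$) gives $b^{\tiny{\textcircled{\tiny\#}}}a=hb^{\tiny{\textcircled{\tiny\#}}}a=ha^{\tiny{\textcircled{\tiny\#}}}a=a^{\tiny{\textcircled{\tiny\#}}}a=a^{\tiny{\textcircled{\tiny\#}}}b$.

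Granting (1), parts (2) and (3) are mechanical substitutions using $ab^{\tiny{\textcircled{\tiny\#}}}=aa^{\tiny{\textcircled{\tiny\#}}}$, $a^{\tiny{\textcircled{\tiny\#}}}b=a^{\tiny{\textcircled{\tiny\#}}}a$, $b^{\tiny{\textcircled{\tiny\#}}}a=a^{\tiny{\textcircled{\tiny\#}}}a$, followed by the collapse $a^{\tiny{\textcircled{\tiny\#}}}aa^{\tiny{\textcircled{\tiny\#}}}=a^{\tiny{\textcircled{\tiny\#}}}$. The middle equality $a^{\tiny{\textcircled{\tiny\#}}}ba^{\tiny{\textcircled{\tiny\#}}}=a^{\tiny{\textcircled{\tiny\#}}}$ of (2) is already Theorem \ref{core-pr1}(2); for the others, e.g. $a^{\tiny{\textcircled{\tiny\#}}}bb^{\tiny{\textcircled{\tiny\#}}}=a^{\tiny{\textcircled{\tiny\#}}}ab^{\tiny{\textcircled{\tiny\#}}}=a^{\tiny{\textcircled{\tiny\#}}}aa^{\tiny{\textcircled{\tiny\#}}}=a^{\tiny{\textcircled{\tiny\#}}}$ and $b^{\tiny{\textcircled{\tiny\#}}}ba^{\tiny{\textcircled{\tiny\#}}}=b^{\tiny{\textcircled{\tiny\#}}}b\,a^{\tiny{\textcircled{\tiny\#}}}aa^{\tiny{\textcircled{\tiny\#}}}=a^{\tiny{\textcircled{\tiny\#}}}$ (using $ha^{\tiny{\textcircled{\tiny\#}}}a=a^{\tiny{\textcircled{\tiny\#}}}a$); the three identities of (3) follow in the same way, e.g. $b^{\tiny{\textcircled{\tiny\#}}}aa^{\tiny{\textcircled{\tiny\#}}}=a^{\tiny{\textcircled{\tiny\#}}}a\,a^{\tiny{\textcircled{\tiny\#}}}=a^{\tiny{\textcircled{\tiny\#}}}$.

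I expect essentially all the difficulty to sit in part (1), since (2)–(3) are pure substitution once (1) is proved. The delicate point in (1) is upgrading the \emph{annihilated} relations $(ab^{\tiny{\textcircled{\tiny\#}}}-aa^{\tiny{\textcircled{\tiny\#}}})b=0$ and $b(a^{\tiny{\textcircled{\tiny\#}}}a-b^{\tiny{\textcircled{\tiny\#}}}a)=0$ to genuine equalities. This is exactly where the range data extracted from $(\star)$ enters, namely $fa=a$ (hence $ef=e$) and $ha=a$ (hence $ha^{\tiny{\textcircled{\tiny\#}}}a=a^{\tiny{\textcircled{\tiny\#}}}a$), combined with the annihilator descriptions ${}^{\circ}b=R(1-f)$ and $b^{\circ}=(1-h)R$ coming from regularity of $b$. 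Correctly producing $(\star)$ from Lemma \ref{minus-regular} and fixing these one-sided idempotent identities is the only step that requires care.
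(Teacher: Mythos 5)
Your proof is correct, and its skeleton coincides with the paper's: both pass from $a\overset{\tiny{\textcircled{\tiny\#}}}\leq b$ to $a\overset{-}\leq b$ via $a^{\tiny{\textcircled{\tiny\#}}}\in a\{1\}$, invoke Lemma \ref{minus-regular} with the particular inner inverse $b^{\tiny{\textcircled{\tiny\#}}}$ to obtain $a=bb^{\tiny{\textcircled{\tiny\#}}}a=ab^{\tiny{\textcircled{\tiny\#}}}b=ab^{\tiny{\textcircled{\tiny\#}}}a$, prove (1) first, and deduce (2)--(3) by substitution. For $ba^{\tiny{\textcircled{\tiny\#}}}=ab^{\tiny{\textcircled{\tiny\#}}}$ your argument and the paper's are the same computation in different packaging: your step $ef=(fe)^{\ast}=e$ is exactly the paper's
$aa^{\tiny{\textcircled{\tiny\#}}}=bb^{\tiny{\textcircled{\tiny\#}}}aa^{\tiny{\textcircled{\tiny\#}}}
=(bb^{\tiny{\textcircled{\tiny\#}}}aa^{\tiny{\textcircled{\tiny\#}}})^{\ast}
=aa^{\tiny{\textcircled{\tiny\#}}}bb^{\tiny{\textcircled{\tiny\#}}}$,
and your annihilator phrasing merely repackages the insertion of $bb^{\tiny{\textcircled{\tiny\#}}}$. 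The genuine divergence is the second identity $a^{\tiny{\textcircled{\tiny\#}}}b=b^{\tiny{\textcircled{\tiny\#}}}a$. The paper's chain opens with $a^{\tiny{\textcircled{\tiny\#}}}b=b^{\tiny{\textcircled{\tiny\#}}}ab^{\tiny{\textcircled{\tiny\#}}}b$, an equality that is not available at that stage: since $ab^{\tiny{\textcircled{\tiny\#}}}b=a$, it is equivalent to the very identity being proved, and the only natural justification is $b^{\tiny{\textcircled{\tiny\#}}}ab^{\tiny{\textcircled{\tiny\#}}}=a^{\tiny{\textcircled{\tiny\#}}}$, i.e.\ part (3) of the theorem itself (or the quoted \cite[Theorem 4.10]{RD}). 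Your treatment avoids this circularity: from $ba^{\tiny{\textcircled{\tiny\#}}}a=a=bb^{\tiny{\textcircled{\tiny\#}}}a$ you place $a^{\tiny{\textcircled{\tiny\#}}}a-b^{\tiny{\textcircled{\tiny\#}}}a$ in $b^{\circ}=(1-b^{\tiny{\textcircled{\tiny\#}}}b)R$ and kill it by multiplying with $h=b^{\tiny{\textcircled{\tiny\#}}}b$, having first verified $ha=a$ (hence $ha^{\tiny{\textcircled{\tiny\#}}}=a^{\tiny{\textcircled{\tiny\#}}}$ from $a^{\tiny{\textcircled{\tiny\#}}}\in aR$); note that the same identity $ha=a$ is also what legitimizes the ``obviously'' in the paper's proof of part (2). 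So your proposal is a valid proof following the same overall strategy, and at the one point where the paper's write-up is gappy it is the more careful of the two; the only cost is the extra idempotent bookkeeping with $e$, $f$, $h$, which you carry out correctly.
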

\begin{proof}
Suppose $a\overset{\tiny{\textcircled{\tiny\#}}}\leq b$, thus $a\overset{-}\leq b$ by $a^{\tiny{\textcircled{\tiny\#}}}\in a\{1\}$,
then $a=bb^{\tiny{\textcircled{\tiny\#}}}a=bb^{\#}a$ by Lemma \ref{minus-regular}.\\
$(1)$ $ba^{\tiny{\textcircled{\tiny\#}}}
           =aa^{\tiny{\textcircled{\tiny\#}}}=bb^{\tiny{\textcircled{\tiny\#}}}aa^{\tiny{\textcircled{\tiny\#}}}
           =(bb^{\tiny{\textcircled{\tiny\#}}}aa^{\tiny{\textcircled{\tiny\#}}})^{*}
           =aa^{\tiny{\textcircled{\tiny\#}}}bb^{\tiny{\textcircled{\tiny\#}}}
           =aa^{\tiny{\textcircled{\tiny\#}}}ab^{\tiny{\textcircled{\tiny\#}}}
           =ab^{\tiny{\textcircled{\tiny\#}}}$.

       $a^{\tiny{\textcircled{\tiny\#}}}b
        =b^{\tiny{\textcircled{\tiny\#}}}ab^{\tiny{\textcircled{\tiny\#}}}b
        =b^{\tiny{\textcircled{\tiny\#}}}ba^{\tiny{\textcircled{\tiny\#}}}b
        =b^{\tiny{\textcircled{\tiny\#}}}aa^{\tiny{\textcircled{\tiny\#}}}b
        =b^{\tiny{\textcircled{\tiny\#}}}aa^{\tiny{\textcircled{\tiny\#}}}a
        =b^{\tiny{\textcircled{\tiny\#}}}a.$\\
$(2)$ It is obviously $b^{\tiny{\textcircled{\tiny\#}}}ba^{\tiny{\textcircled{\tiny\#}}}
     =b^{\tiny{\textcircled{\tiny\#}}}ab^{\tiny{\textcircled{\tiny\#}}}
     =a^{\tiny{\textcircled{\tiny\#}}}$
     ,
     $a^{\tiny{\textcircled{\tiny\#}}}bb^{\tiny{\textcircled{\tiny\#}}}
     =b^{\tiny{\textcircled{\tiny\#}}}ab^{\tiny{\textcircled{\tiny\#}}}
     =a^{\tiny{\textcircled{\tiny\#}}}$
     and
   $a^{\tiny{\textcircled{\tiny\#}}}ba^{\tiny{\textcircled{\tiny\#}}}
   =a^{\tiny{\textcircled{\tiny\#}}}aa^{\tiny{\textcircled{\tiny\#}}}
   =a^{\tiny{\textcircled{\tiny\#}}}.$\\
$(3)$ Similarly to $(2)$, we have
   $b^{\tiny{\textcircled{\tiny\#}}}aa^{\tiny{\textcircled{\tiny\#}}}
   =b^{\tiny{\textcircled{\tiny\#}}}ba^{\tiny{\textcircled{\tiny\#}}}=a^{\tiny{\textcircled{\tiny\#}}}$
,
$a^{\tiny{\textcircled{\tiny\#}}}ab^{\tiny{\textcircled{\tiny\#}}}
=a^{\tiny{\textcircled{\tiny\#}}}bb^{\tiny{\textcircled{\tiny\#}}}=a^{\tiny{\textcircled{\tiny\#}}}$
and
$b^{\tiny{\textcircled{\tiny\#}}}ab^{\tiny{\textcircled{\tiny\#}}}
=b^{\tiny{\textcircled{\tiny\#}}}ba^{\tiny{\textcircled{\tiny\#}}}=a^{\tiny{\textcircled{\tiny\#}}}.$
\end{proof}

A complex matrix $A\in M_{n}(\mathbb{C})$ is called range-Hermite (EP matrix), if $\mathcal{R}(A)=\mathcal{R}(A^{\ast})$.
\begin{rem}\label{slip1}
\emph{In \cite[Theorem $2.4$]{M2}}, it is claimed that the following are equivalent for two complex matrices $A,B$ of index $1$ with the same order:
\begin{itemize}
\item[{\rm (1)}] $A^{\tiny{\textcircled{\tiny\#}}}BA^{\tiny{\textcircled{\tiny\#}}}=A^{\tiny{\textcircled{\tiny\#}}}$;
\item[{\rm (2)}] $A^{\dagger}BA^\#=A^{\tiny{\textcircled{\tiny\#}}}.$
\end{itemize}
\emph{While the implication $(2)\Rightarrow(1)$ is always valid,
the converse is not true in genral. In fact,
let
$A=B=\left[\begin{matrix}
              1 & 1 \\
              0 & 0
       \end{matrix}
       \right]\in M_{2}(\mathbb{C})$\normalsize,
we have $A^\#=A$, $A^{\dagger}=\left[\begin{matrix}
              1/2 & 0 \\
              1/2 & 0
       \end{matrix}
       \right]$\normalsize
and $A^{\tiny{\textcircled{\tiny\#}}}=\left[\begin{matrix}
              1 & 0 \\
              0 & 0
       \end{matrix}
       \right]$\normalsize,
then the condition
$A^{\tiny{\textcircled{\tiny\#}}}BA^{\tiny{\textcircled{\tiny\#}}}
=A^{\tiny{\textcircled{\tiny\#}}}AA^{\tiny{\textcircled{\tiny\#}}}=A^{\tiny{\textcircled{\tiny\#}}}$ holds.
However, $A^{\dagger}BA^\#\neq A^{\tiny{\textcircled{\tiny\#}}}.$
Note that $(1)\Rightarrow(2)$ holds in case $A$ is an EP matrix.}
\end{rem}

\begin{prop}  \label{a-b-core}
Let $a,b \in R^{\tiny{\textcircled{\tiny\#}}}$. Then $a\overset{\tiny{\textcircled{\tiny\#}}}\leq b$ if and only if
$a^{\tiny{\textcircled{\tiny\#}}}b=b^{\tiny{\textcircled{\tiny\#}}}a$,
$ba^{\tiny{\textcircled{\tiny\#}}}=ab^{\tiny{\textcircled{\tiny\#}}}$,
$ab^{\tiny{\textcircled{\tiny\#}}}a=a.$
\end{prop}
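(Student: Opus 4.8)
The plan is to prove the two implications separately, leaning on the characterizations already established. For the forward direction, suppose $a\overset{\tiny{\textcircled{\tiny\#}}}\leq b$. The first two required identities, $a^{\tiny{\textcircled{\tiny\#}}}b=b^{\tiny{\textcircled{\tiny\#}}}a$ and $ba^{\tiny{\textcircled{\tiny\#}}}=ab^{\tiny{\textcircled{\tiny\#}}}$, are exactly the content of Theorem \ref{core-minus-regular}(1), so they come for free. For the third identity $ab^{\tiny{\textcircled{\tiny\#}}}a=a$, I would combine the defining relation $aa^{\tiny{\textcircled{\tiny\#}}}=ba^{\tiny{\textcircled{\tiny\#}}}$ of the core partial order with $ba^{\tiny{\textcircled{\tiny\#}}}=ab^{\tiny{\textcircled{\tiny\#}}}$ from Theorem \ref{core-minus-regular}(1) to obtain $ab^{\tiny{\textcircled{\tiny\#}}}=aa^{\tiny{\textcircled{\tiny\#}}}$; right-multiplying by $a$ and using $aa^{\tiny{\textcircled{\tiny\#}}}a=a$ then yields $ab^{\tiny{\textcircled{\tiny\#}}}a=a$.

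For the converse, assume the three identities hold; the goal is to deduce $a\overset{\tiny{\textcircled{\tiny\#}}}\leq b$. Rather than verify the definition directly, I would aim to establish condition (3) of Theorem \ref{core-pr1}, namely $aa^{\tiny{\textcircled{\tiny\#}}}b=a=ba^{\tiny{\textcircled{\tiny\#}}}a$. The second equality is immediate, since $ba^{\tiny{\textcircled{\tiny\#}}}a=ab^{\tiny{\textcircled{\tiny\#}}}a=a$ using the hypothesis $ba^{\tiny{\textcircled{\tiny\#}}}=ab^{\tiny{\textcircled{\tiny\#}}}$ followed by $ab^{\tiny{\textcircled{\tiny\#}}}a=a$. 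The first equality is the one small idea: rewrite $aa^{\tiny{\textcircled{\tiny\#}}}b=a(a^{\tiny{\textcircled{\tiny\#}}}b)$ and substitute $a^{\tiny{\textcircled{\tiny\#}}}b=b^{\tiny{\textcircled{\tiny\#}}}a$ to convert it into $ab^{\tiny{\textcircled{\tiny\#}}}a$, which equals $a$ by the third hypothesis. An appeal to the equivalence $(1)\Leftrightarrow(3)$ of Theorem \ref{core-pr1} then finishes the argument.

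The manipulations are routine once the right reductions are chosen, so there is no serious obstacle. The only genuinely nontrivial step is recognizing, in the converse, that the hypothesis $a^{\tiny{\textcircled{\tiny\#}}}b=b^{\tiny{\textcircled{\tiny\#}}}a$ is precisely what converts $aa^{\tiny{\textcircled{\tiny\#}}}b$ into the expression $ab^{\tiny{\textcircled{\tiny\#}}}a$ that the third hypothesis controls; pairing each target identity of Theorem \ref{core-pr1}(3) with the correct hypothesis is the crux. I note also that this route avoids invoking the minus-order machinery of Lemma \ref{minus-regular} in the converse, relying instead only on the purely algebraic characterization of Theorem \ref{core-pr1}.
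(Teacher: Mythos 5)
Your proposal is correct and follows essentially the same route as the paper: the forward direction invokes Theorem \ref{core-minus-regular}(1) and then computes $ab^{\tiny{\textcircled{\tiny\#}}}a=ba^{\tiny{\textcircled{\tiny\#}}}a=aa^{\tiny{\textcircled{\tiny\#}}}a=a$, exactly as the paper does. The only (cosmetic) difference is in the converse, where the paper verifies the defining equations $a^{\tiny{\textcircled{\tiny\#}}}a=a^{\tiny{\textcircled{\tiny\#}}}b$ and $aa^{\tiny{\textcircled{\tiny\#}}}=ba^{\tiny{\textcircled{\tiny\#}}}$ directly by the same substitutions you use, whereas you route the identical algebra through condition (3) of Theorem \ref{core-pr1}; both are equally valid.
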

\begin{proof}
Suppose that $a\overset{\tiny{\textcircled{\tiny\#}}}\leq b$. Then
     $a^{\tiny{\textcircled{\tiny\#}}}b=b^{\tiny{\textcircled{\tiny\#}}}a$ and
     $ba^{\tiny{\textcircled{\tiny\#}}}=ab^{\tiny{\textcircled{\tiny\#}}}$
by Theorem \ref{core-minus-regular}, thus
$ab^{\tiny{\textcircled{\tiny\#}}}a=ba^{\tiny{\textcircled{\tiny\#}}}a=aa^{\tiny{\textcircled{\tiny\#}}}a=a.$
Conversely, if $a^{\tiny{\textcircled{\tiny\#}}}b=b^{\tiny{\textcircled{\tiny\#}}}a$,
$ba^{\tiny{\textcircled{\tiny\#}}}=ab^{\tiny{\textcircled{\tiny\#}}}$,
$ab^{\tiny{\textcircled{\tiny\#}}}a=a$, then
       $a^{\tiny{\textcircled{\tiny\#}}}a
       =a^{\tiny{\textcircled{\tiny\#}}}ab^{\tiny{\textcircled{\tiny\#}}}a
       =a^{\tiny{\textcircled{\tiny\#}}}aa^{\tiny{\textcircled{\tiny\#}}}b
       =a^{\tiny{\textcircled{\tiny\#}}}b$ and
        $aa^{\tiny{\textcircled{\tiny\#}}}
       =ab^{\tiny{\textcircled{\tiny\#}}}aa^{\tiny{\textcircled{\tiny\#}}}
       =ba^{\tiny{\textcircled{\tiny\#}}}aa^{\tiny{\textcircled{\tiny\#}}}
       =ba^{\tiny{\textcircled{\tiny\#}}}.$
\end{proof}

In \cite[Theorem 2.5]{MRT} Malik et al. investigated the reverse order law for two core invertible complex matrices
under the matrix core partial order. By \cite[Theorem 3.1]{XCZ}, we can get that the equations 
$axa=a$ and $xax=x$ in \cite[Theorem 2,14]{RDD} can be dropped.

\begin{lem} \emph{\cite[Theorem 3.1]{XCZ}} \label{five-equations-yy}
Let $a,x \in R$, then $a\in R^{\tiny\textcircled{\tiny\#}}$ with core inverse $x$ if and only if
$(ax)^{\ast}=ax$, $xa^{2}=a$ and $ax^{2}=x$.
\end{lem}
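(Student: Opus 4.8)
The plan is to prove the two implications separately, the substantive point being that the three listed equations already force the two "outer/inner" equations $axa=a$ and $xax=x$, after which the five-equation characterization of \cite[Theorem 2.14]{RDD} applies. For the nontrivial direction, assume $(ax)^{\ast}=ax$, $xa^{2}=a$ and $ax^{2}=x$. First I would recover $xax=x$: starting from $x=ax^{2}$ and left-multiplying by $xa$ gives $xax=xa^{2}x^{2}$, and substituting $xa^{2}=a$ and then $ax^{2}=x$ collapses the right-hand side to $x$. Symmetrically, starting from $a=xa^{2}$ and left-multiplying by $ax$ gives $axa=ax^{2}a^{2}=xa^{2}=a$, using $ax^{2}=x$ and then $xa^{2}=a$. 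Thus all five equations $axa=a$, $xax=x$, $(ax)^{\ast}=ax$, $xa^{2}=a$, $ax^{2}=x$ hold, and \cite[Theorem 2.14]{RDD} yields that $a$ is core invertible with core inverse $x$.

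Alternatively, to avoid invoking the five-equation form, I would verify the defining conditions $axa=a$, $xR=aR$, $Rx=Ra^{\ast}$ directly. Having produced $axa=a$ as above, the inclusions $a=xa^{2}\in xR$ and $x=ax^{2}\in aR$ give $aR=xR$; and using $(ax)^{\ast}=ax$ together with $xax=x$, one has $x=x(ax)=x(ax)^{\ast}=xx^{\ast}a^{\ast}\in Ra^{\ast}$ as well as $a^{\ast}=(axa)^{\ast}=a^{\ast}(ax)^{\ast}=a^{\ast}(ax)\in Rx$, whence $Rx=Ra^{\ast}$. By uniqueness of the core inverse this forces $x=a^{\tiny\textcircled{\tiny\#}}$. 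The converse direction is then immediate: if $x$ is the core inverse of $a$, \cite[Theorem 2.14]{RDD} gives all five equations, in particular the three required ones.

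The only genuine content lies in the two "twisted" cancellations producing $xax=x$ and $axa=a$, and the main thing to watch is that each of them uses \emph{both} $xa^{2}=a$ and $ax^{2}=x$, applied in the correct order; neither hypothesis alone suffices, and it is the interplay between the left-shift $xa^{2}=a$ and the right-shift $ax^{2}=x$ (glued by the symmetry $x=ax^{2}$, $a=xa^{2}$) that does all the work. Everything else is bookkeeping with the self-adjointness of $ax$.
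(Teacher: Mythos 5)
Your proof is correct, and both of the routes you sketch go through. The key cancellations check out: $xax = xa(ax^{2}) = (xa^{2})x^{2} = ax^{2} = x$ and $axa = ax(xa^{2}) = (ax^{2})a^{2} = xa^{2} = a$, so the three hypotheses do recover the two dropped equations, after which the five-equation characterization of \cite[Theorem 2.14]{RDD} applies; your alternative direct verification of the defining conditions $axa=a$, $xR=aR$, $Rx=Ra^{\ast}$ (via $a=xa^{2}\in xR$, $x=ax^{2}\in aR$, $x=xax=x(ax)^{\ast}=xx^{\ast}a^{\ast}\in Ra^{\ast}$ and $a^{\ast}=(axa)^{\ast}=a^{\ast}ax\in Rx$) is also sound, and uniqueness of the core inverse finishes it. The one thing to be aware of is that there is no proof in this paper to compare against: Lemma \ref{five-equations-yy} is quoted verbatim from \cite[Theorem 3.1]{XCZ} without proof, precisely as the justification that the equations $axa=a$ and $xax=x$ in \cite[Theorem 2.14]{RDD} can be dropped. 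Your first argument is essentially a reconstruction of what that citation encapsulates, while your second argument has the additional merit of being self-contained in the forward direction, resting only on the definition of the core inverse given in the introduction ($axa=a$, $xR=aR$, $Rx=Ra^{\ast}$) and its uniqueness rather than on the five-equation theorem; only your converse direction still leans on \cite[Theorem 2.14]{RDD}, which is harmless since that is exactly the result the lemma is meant to refine.
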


\begin{thm} \label{Reverse-order1}
Let $a,~b \in R^{\tiny{\textcircled{\tiny\#}}}$ with $a\overset{\tiny{\textcircled{\tiny\#}}}\leq b$. Then:
\begin{itemize}
\item[{\rm (1)}] $(ab)^{\tiny{\textcircled{\tiny\#}}}
                 =b^{\tiny{\textcircled{\tiny\#}}}a^{\tiny{\textcircled{\tiny\#}}}
                 =(a^{\tiny{\textcircled{\tiny\#}}})^{2}=(a^{2})^{\tiny{\textcircled{\tiny\#}}}=(ba)^{\tiny{\textcircled{\tiny\#}}}$;
\item[{\rm (2)}] $ab\in R^{EP}$ whenever $a\in R^{EP}$.
\end{itemize}
\end{thm}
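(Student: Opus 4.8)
The plan is to reduce everything in part (1) to the single candidate $x=(a^{\tiny{\textcircled{\tiny\#}}})^{2}$ and to verify, via the three-equation characterization of the core inverse in Lemma \ref{five-equations-yy}, that this $x$ is simultaneously the core inverse of $ab$, of $a^{2}$, and of $ba$. Two of the five asserted equalities come almost for free. First, since $a\overset{\tiny{\textcircled{\tiny\#}}}\leq b$ gives $aa^{\tiny{\textcircled{\tiny\#}}}=ba^{\tiny{\textcircled{\tiny\#}}}$, Lemma \ref{lemma-partial2}(2) yields $a^{2}=ba$, so that $(ba)^{\tiny{\textcircled{\tiny\#}}}=(a^{2})^{\tiny{\textcircled{\tiny\#}}}$ holds with no computation at all. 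Second, using Theorem \ref{core-minus-regular}(3), namely $b^{\tiny{\textcircled{\tiny\#}}}aa^{\tiny{\textcircled{\tiny\#}}}=a^{\tiny{\textcircled{\tiny\#}}}$, together with the defining identity $a(a^{\tiny{\textcircled{\tiny\#}}})^{2}=a^{\tiny{\textcircled{\tiny\#}}}$ from Lemma \ref{five-equations-yy}, I would write $b^{\tiny{\textcircled{\tiny\#}}}a^{\tiny{\textcircled{\tiny\#}}}=b^{\tiny{\textcircled{\tiny\#}}}(aa^{\tiny{\textcircled{\tiny\#}}})a^{\tiny{\textcircled{\tiny\#}}}=(b^{\tiny{\textcircled{\tiny\#}}}aa^{\tiny{\textcircled{\tiny\#}}})a^{\tiny{\textcircled{\tiny\#}}}=(a^{\tiny{\textcircled{\tiny\#}}})^{2}$, which settles $b^{\tiny{\textcircled{\tiny\#}}}a^{\tiny{\textcircled{\tiny\#}}}=(a^{\tiny{\textcircled{\tiny\#}}})^{2}$.

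The core of the argument is checking the three conditions of Lemma \ref{five-equations-yy} for the pair $(ab,x)$; the case of $a^{2}$ is entirely analogous and in fact needs no partial-order hypothesis. The useful consequences of $a\overset{\tiny{\textcircled{\tiny\#}}}\leq b$ here are $ba^{\tiny{\textcircled{\tiny\#}}}=aa^{\tiny{\textcircled{\tiny\#}}}$ and $a^{2}=ba$. For self-adjointness of $ab\,x$ I would collapse $ab(a^{\tiny{\textcircled{\tiny\#}}})^{2}=a(ba^{\tiny{\textcircled{\tiny\#}}})a^{\tiny{\textcircled{\tiny\#}}}=a(aa^{\tiny{\textcircled{\tiny\#}}})a^{\tiny{\textcircled{\tiny\#}}}=a^{2}(a^{\tiny{\textcircled{\tiny\#}}})^{2}=aa^{\tiny{\textcircled{\tiny\#}}}$, which is self-adjoint. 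For $x(ab)^{2}=ab$ I would first use $ba=a^{2}$ to get $(ab)^{2}=a(ba)b=a^{3}b$ and then $(a^{\tiny{\textcircled{\tiny\#}}})^{2}a^{3}=a$ (two applications of $a^{\tiny{\textcircled{\tiny\#}}}a^{2}=a$), giving $(a^{\tiny{\textcircled{\tiny\#}}})^{2}(ab)^{2}=ab$. For $ab\,x^{2}=x$ I would reuse $ab(a^{\tiny{\textcircled{\tiny\#}}})^{2}=aa^{\tiny{\textcircled{\tiny\#}}}$ and then peel off factors with $a(a^{\tiny{\textcircled{\tiny\#}}})^{2}=a^{\tiny{\textcircled{\tiny\#}}}$. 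Once all three hold, Lemma \ref{five-equations-yy} gives $(ab)^{\tiny{\textcircled{\tiny\#}}}=(a^{\tiny{\textcircled{\tiny\#}}})^{2}$, and the identical three-step verification with $ab$ replaced by $a^{2}$ yields $(a^{2})^{\tiny{\textcircled{\tiny\#}}}=(a^{\tiny{\textcircled{\tiny\#}}})^{2}$; chaining these with the two easy identities above closes part (1).

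For part (2) I would work from the characterization that $c\in R^{\tiny{\textcircled{\tiny\#}}}$ is EP precisely when it commutes with its core inverse, $cc^{\tiny{\textcircled{\tiny\#}}}=c^{\tiny{\textcircled{\tiny\#}}}c$; this is immediate from the definitions, since a commuting core inverse is automatically a group inverse, while the self-adjointness of $cc^{\tiny{\textcircled{\tiny\#}}}$ then forces $c^{\tiny{\textcircled{\tiny\#}}}=c^{\dagger}$ as well. When $a\in R^{EP}$ one has $a^{\tiny{\textcircled{\tiny\#}}}=a^{\#}$ commuting with $a$, so I would compare the two one-sided products of $ab$ with $(ab)^{\tiny{\textcircled{\tiny\#}}}=(a^{\tiny{\textcircled{\tiny\#}}})^{2}$ from part (1). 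The left product is already in hand: $(ab)(ab)^{\tiny{\textcircled{\tiny\#}}}=ab(a^{\tiny{\textcircled{\tiny\#}}})^{2}=aa^{\tiny{\textcircled{\tiny\#}}}$. For the right product I would use commutativity of $a$ with $a^{\tiny{\textcircled{\tiny\#}}}=a^{\#}$ to reduce $(a^{\tiny{\textcircled{\tiny\#}}})^{2}a$ to $a^{\tiny{\textcircled{\tiny\#}}}$, and the partial-order relation $a^{\tiny{\textcircled{\tiny\#}}}b=a^{\tiny{\textcircled{\tiny\#}}}a$, obtaining $(ab)^{\tiny{\textcircled{\tiny\#}}}(ab)=(a^{\tiny{\textcircled{\tiny\#}}})^{2}ab=a^{\tiny{\textcircled{\tiny\#}}}b=a^{\tiny{\textcircled{\tiny\#}}}a=aa^{\tiny{\textcircled{\tiny\#}}}$, the final equality being the EP hypothesis on $a$. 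Since both products equal $aa^{\tiny{\textcircled{\tiny\#}}}$, the element $ab$ commutes with its core inverse and is therefore EP.

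I expect the main obstacle to be bookkeeping rather than anything conceptual: the whole theorem is a sequence of collapses of long monomials, and the one place where something could genuinely fail is the step $(ab)^{2}=a^{3}b$, which silently uses $ba=a^{2}$ and is exactly where the \emph{two-sided} core partial order (as opposed to a mere one-sided relation) is needed. For this reason I would isolate $ba=a^{2}$ together with the two rewriting rules $a^{\tiny{\textcircled{\tiny\#}}}a^{2}=a$ and $a(a^{\tiny{\textcircled{\tiny\#}}})^{2}=a^{\tiny{\textcircled{\tiny\#}}}$ at the very beginning, and then drive every simplification in both parts by repeated application of these three identities.
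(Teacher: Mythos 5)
Your proposal is correct and takes essentially the same route as the paper: both arguments reduce part (1) to checking the three conditions of Lemma \ref{five-equations-yy} for the candidate $b^{\tiny{\textcircled{\tiny\#}}}a^{\tiny{\textcircled{\tiny\#}}}=(a^{\tiny{\textcircled{\tiny\#}}})^{2}$ using $ba=a^{2}$ and $ba^{\tiny{\textcircled{\tiny\#}}}=aa^{\tiny{\textcircled{\tiny\#}}}$, and both settle part (2) by showing that $(ab)(ab)^{\tiny{\textcircled{\tiny\#}}}$ and $(ab)^{\tiny{\textcircled{\tiny\#}}}(ab)$ coincide (each equalling $aa^{\tiny{\textcircled{\tiny\#}}}$) under the EP hypothesis on $a$. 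If anything, yours is slightly more self-contained, since you explicitly verify $(a^{2})^{\tiny{\textcircled{\tiny\#}}}=(a^{\tiny{\textcircled{\tiny\#}}})^{2}$ and justify the implication ``$c$ commutes with $c^{\tiny{\textcircled{\tiny\#}}}$ $\Rightarrow$ $c\in R^{EP}$,'' both of which the paper uses without comment.
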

\begin{proof}
$(1)$ Suppose that $a\overset{\tiny{\textcircled{\tiny\#}}}\leq b$. Then $a^{\tiny{\textcircled{\tiny\#}}}b=b^{\tiny{\textcircled{\tiny\#}}}a$ by Proposition \ref{a-b-core}. Thus,
$b^{\tiny{\textcircled{\tiny\#}}}a^{\tiny{\textcircled{\tiny\#}}}
 =b^{\tiny{\textcircled{\tiny\#}}}aa^{\tiny{\textcircled{\tiny\#}}}a^{\tiny{\textcircled{\tiny\#}}}
 =a^{\tiny{\textcircled{\tiny\#}}}ba^{\tiny{\textcircled{\tiny\#}}}a^{\tiny{\textcircled{\tiny\#}}}
 =a^{\tiny{\textcircled{\tiny\#}}}aa^{\tiny{\textcircled{\tiny\#}}}a^{\tiny{\textcircled{\tiny\#}}}
 =a^{\tiny{\textcircled{\tiny\#}}}a^{\tiny{\textcircled{\tiny\#}}}
 =(a^{\tiny{\textcircled{\tiny\#}}})^{2}
 =(a^{2})^{\tiny{\textcircled{\tiny\#}}}
 =(ba)^{\tiny{\textcircled{\tiny\#}}}.$
Let $x=b^{\tiny{\textcircled{\tiny\#}}}a^{\tiny{\textcircled{\tiny\#}}}$. Then
\begin{eqnarray*}
&&abx=abb^{\tiny{\textcircled{\tiny\#}}}a^{\tiny{\textcircled{\tiny\#}}}
       =aba^{\tiny{\textcircled{\tiny\#}}}a^{\tiny{\textcircled{\tiny\#}}}
       =aaa^{\tiny{\textcircled{\tiny\#}}}a^{\tiny{\textcircled{\tiny\#}}}
       =aa^{\tiny{\textcircled{\tiny\#}}}
       =(aa^{\tiny{\textcircled{\tiny\#}}})^{*}
       =(abb^{\tiny{\textcircled{\tiny\#}}}a^{\tiny{\textcircled{\tiny\#}}})^{*};\\
&&x(ab)^{2}=b^{\tiny{\textcircled{\tiny\#}}}a^{\tiny{\textcircled{\tiny\#}}}(ab)^{2}
      =b^{\tiny{\textcircled{\tiny\#}}}a^{\tiny{\textcircled{\tiny\#}}}a(ba)b
      = a^{\tiny{\textcircled{\tiny\#}}}a^{\tiny{\textcircled{\tiny\#}}}aa^{2}b
      = a^{\tiny{\textcircled{\tiny\#}}}a^{2}b
      = ab;\\
&&abx^{2}=ab(b^{\tiny{\textcircled{\tiny\#}}}a^{\tiny{\textcircled{\tiny\#}}})^{2}
             =a(ba^{\tiny{\textcircled{\tiny\#}}})a^{\tiny{\textcircled{\tiny\#}}}(a^{\tiny{\textcircled{\tiny\#}}})^{2}
             =(a^{\tiny{\textcircled{\tiny\#}}})^{2}
             = b^{\tiny{\textcircled{\tiny\#}}}a^{\tiny{\textcircled{\tiny\#}}}.
\end{eqnarray*}
Thus $(ab)^{\tiny{\textcircled{\tiny\#}}}=b^{\tiny{\textcircled{\tiny\#}}}a^{\tiny{\textcircled{\tiny\#}}}$ by Lemma \ref{five-equations-yy}.

$(2)$  Suppose that $a\in R^{EP}$. Then $a^{\tiny{\textcircled{\tiny\#}}}a=aa^{\tiny{\textcircled{\tiny\#}}}.$ Thus
\begin{eqnarray*}
&&b^{\tiny{\textcircled{\tiny\#}}}a^{\tiny{\textcircled{\tiny\#}}}ab
                                       =b^{\tiny{\textcircled{\tiny\#}}}aa^{\tiny{\textcircled{\tiny\#}}}b
                                       =a^{\tiny{\textcircled{\tiny\#}}}ba^{\tiny{\textcircled{\tiny\#}}}b
                                       =a^{\tiny{\textcircled{\tiny\#}}}b
                                       =a^{\tiny{\textcircled{\tiny\#}}}a;\\
&&abb^{\tiny{\textcircled{\tiny\#}}}a^{\tiny{\textcircled{\tiny\#}}}
       =abb^{\tiny{\textcircled{\tiny\#}}}a(a^{\tiny{\textcircled{\tiny\#}}})^{2}
       =aba^{\tiny{\textcircled{\tiny\#}}}b(a^{\tiny{\textcircled{\tiny\#}}})^{2}
       =aaa^{\tiny{\textcircled{\tiny\#}}}a(a^{\tiny{\textcircled{\tiny\#}}})^{2}
       =aa^{\tiny{\textcircled{\tiny\#}}}.
\end{eqnarray*}
By $(1)$, we have
$b^{\tiny{\textcircled{\tiny\#}}}a^{\tiny{\textcircled{\tiny\#}}}ab
=(ab)^{\tiny{\textcircled{\tiny\#}}}ab
=ab(ab)^{\tiny{\textcircled{\tiny\#}}}
=abb^{\tiny{\textcircled{\tiny\#}}}a^{\tiny{\textcircled{\tiny\#}}}$,
hence $ab\in R^{EP}.$
\end{proof}

\section{ Relationships between the core partial order and other partial orders }\label{a}

In this section, we consider the relationships between core partial order and other partial orders.
Recall that the left star partial order $a$ $\ast \!\!\leq b$ in $R$ is defined by: $a^{\ast}a=a^{\ast}b$ and $aR\subseteq bR$.
The right sharp partial order $a\leq_{\#} b$ in $R^\#$ is defined by: $aa^\#=ba^\#$ and $Ra\subseteq Rb$.
Let us start with a auxiliary lemma.

\begin{lem} \emph{\cite{BT}} \label{core-star-sharp}
Let $a\in R^{\tiny{\textcircled{\tiny\#}}}$ and $b\in R$. Then $a\overset{\tiny{\textcircled{\tiny\#}}}\leq b$ if and only if
$a$ $\ast\!\!\leq b$ and $a\leq_{\#} b$.
\end{lem}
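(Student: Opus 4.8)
The plan is to route the entire argument through Lemma \ref{lemma-partial2}, which is the precise dictionary between the two defining equations of the core order and the ``star half'' $a^{\ast}a=a^{\ast}b$ and the ``sharp half'' $aa^{\#}=ba^{\#}$ of the two orders on the right-hand side. First I record that $a\in R^{\tiny{\textcircled{\tiny\#}}}$ forces $a\in R^{\#}$, so that $a\leq_{\#}b$ is even meaningful; I also keep in mind the inner-inverse identity $aa^{\tiny{\textcircled{\tiny\#}}}a=a$, which is the only structural fact needed beyond the lemma.

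For the forward implication, assume $a\overset{\tiny{\textcircled{\tiny\#}}}\leq b$, so that $a^{\tiny{\textcircled{\tiny\#}}}a=a^{\tiny{\textcircled{\tiny\#}}}b$ and $aa^{\tiny{\textcircled{\tiny\#}}}=ba^{\tiny{\textcircled{\tiny\#}}}$. Lemma \ref{lemma-partial2}(1) turns the first equation into $a^{\ast}a=a^{\ast}b$, and Lemma \ref{lemma-partial2}(2) turns the second into $aa^{\#}=ba^{\#}$; these are exactly the equational parts of $a\ \ast\!\!\leq b$ and $a\leq_{\#}b$. It remains to produce the two one-sided containments. Multiplying $aa^{\tiny{\textcircled{\tiny\#}}}=ba^{\tiny{\textcircled{\tiny\#}}}$ on the right by $a$ and using $aa^{\tiny{\textcircled{\tiny\#}}}a=a$ gives $a=ba^{\tiny{\textcircled{\tiny\#}}}a\in bR$, whence $aR\subseteq bR$; symmetrically, $a^{\tiny{\textcircled{\tiny\#}}}a=a^{\tiny{\textcircled{\tiny\#}}}b$ gives $a=aa^{\tiny{\textcircled{\tiny\#}}}b\in Rb$, whence $Ra\subseteq Rb$. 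Thus both $a\ \ast\!\!\leq b$ and $a\leq_{\#}b$ hold.

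For the converse, assume $a\ \ast\!\!\leq b$ and $a\leq_{\#}b$. The only ingredients I need are the equational parts $a^{\ast}a=a^{\ast}b$ and $aa^{\#}=ba^{\#}$. Feeding these back through Lemma \ref{lemma-partial2}(1) and (2) respectively returns $a^{\tiny{\textcircled{\tiny\#}}}a=a^{\tiny{\textcircled{\tiny\#}}}b$ and $aa^{\tiny{\textcircled{\tiny\#}}}=ba^{\tiny{\textcircled{\tiny\#}}}$, which is the definition of $a\overset{\tiny{\textcircled{\tiny\#}}}\leq b$. I do not expect a genuine obstacle here: the work is entirely bookkeeping around Lemma \ref{lemma-partial2}. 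The one point worth flagging is that the converse never consumes the containments $aR\subseteq bR$ and $Ra\subseteq Rb$ --- once the two equations hold, these inclusions are automatic (exactly as derived in the forward direction) --- so the real content of the lemma is that the star equation together with the sharp equation already pins down the core order.
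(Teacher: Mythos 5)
Your proof is correct. Note that the paper does not actually prove this lemma --- it is quoted from \cite{BT} (with the ring version appearing in \cite{RD}) --- so there is no in-paper argument to compare against; your routing through Lemma \ref{lemma-partial2}, plus the observation that $a=aa^{\tiny{\textcircled{\tiny\#}}}a$ makes the two containments $aR\subseteq bR$ and $Ra\subseteq Rb$ automatic consequences of the equational halves, is exactly the natural argument and matches how the paper itself uses these conversions elsewhere (e.g.\ in Theorem \ref{core-and-other}). Your closing remark is also worth keeping: the proof shows the containments in the definitions of $\ast\!\!\leq$ and $\leq_{\#}$ are redundant here, so the lemma really says the star equation and the sharp equation alone characterize the core order for $a\in R^{\tiny{\textcircled{\tiny\#}}}$.
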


In \cite[Theorem 4.10]{RD}, Raki\'{c} and Djordjevi\'{c} gave the relationship between the core partial order and the minus partial order
for $a, b\in R^{\tiny{\textcircled{\tiny\#}}}$.
For instance, it is proved that $a\overset{\tiny{\textcircled{\tiny\#}}}\leq b$ if and only if $a\overset{-}\leq b$ and
$b^{\tiny{\textcircled{\tiny\#}}}ab^{\tiny{\textcircled{\tiny\#}}}=a^{\tiny{\textcircled{\tiny\#}}}$.
By Lemma \ref{core-star-sharp}, the core partial order implies the left star partial order
and the right sharp partial order.
Motivated by \cite[Theorem 4.10]{RD}, we have the following theorem.

\begin{thm} \label{core-and-other}
Let $a,~b \in R^{\tiny{\textcircled{\tiny\#}}}$. Then the following are equivalent:
\begin{itemize}
\item[{\rm (1)}] $a\overset{\tiny{\textcircled{\tiny\#}}}\leq b$;
\item[{\rm (2)}] $a$ $\ast\!\!\leq b$  and $ba^{\tiny{\textcircled{\tiny\#}}}b=a$;
\item[{\rm (3)}] $a$ $\ast \!\!\leq b$ and $b^{\tiny{\textcircled{\tiny\#}}}aa^{\tiny{\textcircled{\tiny\#}}}=a^{\tiny{\textcircled{\tiny\#}}}$;
\item[{\rm (4)}] $a$ $\ast \!\!\leq b$ and $b^{\tiny{\textcircled{\tiny\#}}}ab^{\tiny{\textcircled{\tiny\#}}}=a^{\tiny{\textcircled{\tiny\#}}}$;
\item[{\rm (5)}] $a\leq_{\#} b$ and $ba^{\tiny{\textcircled{\tiny\#}}}b=a$;
\item[{\rm (6)}] $a\leq_{\#} b$ and $a^{\tiny{\textcircled{\tiny\#}}}ab^{\tiny{\textcircled{\tiny\#}}}=a^{\tiny{\textcircled{\tiny\#}}}$.
\end{itemize}
\end{thm}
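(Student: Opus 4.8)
The plan is to establish all six equivalences by proving that (1) implies each of (2)--(6) and, conversely, that each of (2)--(6) implies (1). Since the forward direction is uniform and short, the real content lies in the five converse implications. For the forward direction, Lemma~\ref{core-star-sharp} already yields that $a\overset{\tiny{\textcircled{\tiny\#}}}\leq b$ forces both $a$ $\ast\!\!\leq b$ and $a\leq_{\#}b$, so only the accompanying equalities remain, and these are supplied verbatim by earlier results: $ba^{\tiny{\textcircled{\tiny\#}}}b=a$ is Theorem~\ref{core-pr1}(2), while $b^{\tiny{\textcircled{\tiny\#}}}aa^{\tiny{\textcircled{\tiny\#}}}=b^{\tiny{\textcircled{\tiny\#}}}ab^{\tiny{\textcircled{\tiny\#}}}=a^{\tiny{\textcircled{\tiny\#}}}ab^{\tiny{\textcircled{\tiny\#}}}=a^{\tiny{\textcircled{\tiny\#}}}$ is Theorem~\ref{core-minus-regular}(3). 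Hence (1) implies every one of (2)--(6) with no new computation.

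The key idea for the converses is to split each mixed order into its two \emph{halves} and recover the one that is missing. By Lemma~\ref{lemma-partial2}, the relation $a^{\ast}a=a^{\ast}b$ contained in $a$ $\ast\!\!\leq b$ is exactly $a^{\tiny{\textcircled{\tiny\#}}}a=a^{\tiny{\textcircled{\tiny\#}}}b$, and the relation $aa^{\#}=ba^{\#}$ contained in $a\leq_{\#}b$ is exactly $aa^{\tiny{\textcircled{\tiny\#}}}=ba^{\tiny{\textcircled{\tiny\#}}}$. Thus in (2)--(4) I already hold $a^{\tiny{\textcircled{\tiny\#}}}a=a^{\tiny{\textcircled{\tiny\#}}}b$ and must produce $aa^{\tiny{\textcircled{\tiny\#}}}=ba^{\tiny{\textcircled{\tiny\#}}}$, while in (5)--(6) I already hold $aa^{\tiny{\textcircled{\tiny\#}}}=ba^{\tiny{\textcircled{\tiny\#}}}$ and must produce $a^{\tiny{\textcircled{\tiny\#}}}a=a^{\tiny{\textcircled{\tiny\#}}}b$; in either case the two halves together give (1). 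I will also use the two absorption identities coming from the inclusion parts of the orders, namely $bb^{\tiny{\textcircled{\tiny\#}}}a=a$ (from $aR\subseteq bR$) and $ab^{\tiny{\textcircled{\tiny\#}}}b=a$ (from $Ra\subseteq Rb$), together with the standard facts $a^{\tiny{\textcircled{\tiny\#}}}a^{2}=a$, $a^{\tiny{\textcircled{\tiny\#}}}aa^{\tiny{\textcircled{\tiny\#}}}=a^{\tiny{\textcircled{\tiny\#}}}$ and $b^{\tiny{\textcircled{\tiny\#}}}bb^{\tiny{\textcircled{\tiny\#}}}=b^{\tiny{\textcircled{\tiny\#}}}$ arising from Lemma~\ref{five-equations-yy} and the core-inverse axioms.

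The four easy converses are then one-line manipulations. For (2), using $a^{\tiny{\textcircled{\tiny\#}}}a=a^{\tiny{\textcircled{\tiny\#}}}b$ I rewrite $ba^{\tiny{\textcircled{\tiny\#}}}b=a$ as $ba^{\tiny{\textcircled{\tiny\#}}}a=a$, then right-multiply by $a$ and apply $a^{\tiny{\textcircled{\tiny\#}}}a^{2}=a$ to get $ba=a^{2}$, i.e.\ $aa^{\tiny{\textcircled{\tiny\#}}}=ba^{\tiny{\textcircled{\tiny\#}}}$. For (3), left-multiplying $b^{\tiny{\textcircled{\tiny\#}}}aa^{\tiny{\textcircled{\tiny\#}}}=a^{\tiny{\textcircled{\tiny\#}}}$ by $b$ and absorbing $bb^{\tiny{\textcircled{\tiny\#}}}a=a$ yields $aa^{\tiny{\textcircled{\tiny\#}}}=ba^{\tiny{\textcircled{\tiny\#}}}$ at once. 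For (5), left-multiplying $ba^{\tiny{\textcircled{\tiny\#}}}b=a$ by $a^{\tiny{\textcircled{\tiny\#}}}$ and using $ba^{\tiny{\textcircled{\tiny\#}}}=aa^{\tiny{\textcircled{\tiny\#}}}$ collapses $a^{\tiny{\textcircled{\tiny\#}}}ba^{\tiny{\textcircled{\tiny\#}}}$ to $a^{\tiny{\textcircled{\tiny\#}}}$, whence $a^{\tiny{\textcircled{\tiny\#}}}b=a^{\tiny{\textcircled{\tiny\#}}}a$; for (6), right-multiplying $a^{\tiny{\textcircled{\tiny\#}}}ab^{\tiny{\textcircled{\tiny\#}}}=a^{\tiny{\textcircled{\tiny\#}}}$ by $b$ and using $ab^{\tiny{\textcircled{\tiny\#}}}b=a$ gives $a^{\tiny{\textcircled{\tiny\#}}}a=a^{\tiny{\textcircled{\tiny\#}}}b$ directly.

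I expect (4)$\Rightarrow$(1) to be the main obstacle, since the extra hypothesis $b^{\tiny{\textcircled{\tiny\#}}}ab^{\tiny{\textcircled{\tiny\#}}}=a^{\tiny{\textcircled{\tiny\#}}}$ is symmetric in $a$ and $b$ and does not hand over $aa^{\tiny{\textcircled{\tiny\#}}}$ directly. My route is first to derive the auxiliary equality $ab^{\tiny{\textcircled{\tiny\#}}}=ba^{\tiny{\textcircled{\tiny\#}}}$ by left-multiplying the hypothesis by $b$ and absorbing $bb^{\tiny{\textcircled{\tiny\#}}}a=a$; then to left-multiply the hypothesis by $a$ to obtain $ab^{\tiny{\textcircled{\tiny\#}}}ab^{\tiny{\textcircled{\tiny\#}}}=aa^{\tiny{\textcircled{\tiny\#}}}$; and finally to collapse the left-hand side by successive substitutions using $ab^{\tiny{\textcircled{\tiny\#}}}=ba^{\tiny{\textcircled{\tiny\#}}}$, $a^{\tiny{\textcircled{\tiny\#}}}a=a^{\tiny{\textcircled{\tiny\#}}}b$ and $b^{\tiny{\textcircled{\tiny\#}}}bb^{\tiny{\textcircled{\tiny\#}}}=b^{\tiny{\textcircled{\tiny\#}}}$ down to $ab^{\tiny{\textcircled{\tiny\#}}}$, which gives $aa^{\tiny{\textcircled{\tiny\#}}}=ab^{\tiny{\textcircled{\tiny\#}}}=ba^{\tiny{\textcircled{\tiny\#}}}$. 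Combined with the half $a^{\tiny{\textcircled{\tiny\#}}}a=a^{\tiny{\textcircled{\tiny\#}}}b$ already in hand, this closes the last converse and completes the chain of equivalences.
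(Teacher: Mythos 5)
Your proposal is correct and follows essentially the same route as the paper: both directions are handled by splitting the mixed orders into halves via Lemma \ref{lemma-partial2}, using the absorption identities $bb^{\tiny{\textcircled{\tiny\#}}}a=a$ and $ab^{\tiny{\textcircled{\tiny\#}}}b=a$ from the inclusion parts, and deriving $ba^{\tiny{\textcircled{\tiny\#}}}=ab^{\tiny{\textcircled{\tiny\#}}}$ as the pivot for the hardest case $(4)\Rightarrow(1)$. Your only deviations are cosmetic: in $(2)\Rightarrow(1)$ you pass through $ba=a^{2}$ and invoke Lemma \ref{lemma-partial2} where the paper computes $aa^{\tiny{\textcircled{\tiny\#}}}=ba^{\tiny{\textcircled{\tiny\#}}}ba^{\tiny{\textcircled{\tiny\#}}}=ba^{\tiny{\textcircled{\tiny\#}}}$ directly, and in $(4)\Rightarrow(1)$ you collapse $ab^{\tiny{\textcircled{\tiny\#}}}ab^{\tiny{\textcircled{\tiny\#}}}$ to $ab^{\tiny{\textcircled{\tiny\#}}}$ by substitutions where the paper routes through $aa^{\tiny{\textcircled{\tiny\#}}}bb^{\tiny{\textcircled{\tiny\#}}}=aa^{\tiny{\textcircled{\tiny\#}}}$; both computations are valid and equivalent in substance.
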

\begin{proof}
$(1) \Rightarrow (2)$-$(6)$ It is obviously by Theorem \ref{core-pr1}, Theorem \ref{core-minus-regular} and Lemma \ref{core-star-sharp}.\\
$(2) \Rightarrow (1)$ Suppose that $a$ $\ast \!\!\leq b$ and $ba^{\tiny{\textcircled{\tiny\#}}}b=a$. Then $a^{*}a=a^{*}b$ and $aR\subseteq bR$.
We have $a^{*}a=a^{*}b$ if and only if $a^{\tiny{\textcircled{\tiny\#}}}a=a^{\tiny{\textcircled{\tiny\#}}}b$ by Lemma \ref{lemma-partial2}, thus $aa^{\tiny{\textcircled{\tiny\#}}}=ba^{\tiny{\textcircled{\tiny\#}}}ba^{\tiny{\textcircled{\tiny\#}}}
=ba^{\tiny{\textcircled{\tiny\#}}}aa^{\tiny{\textcircled{\tiny\#}}}=ba^{\tiny{\textcircled{\tiny\#}}}.$\\
$(3) \Rightarrow (1)$ Suppose that $a$ $\ast\!\!\leq b$. We have $a=bs$ for some $s\in R$, then $a=bs=bb^{\tiny{\textcircled{\tiny\#}}}bs=bb^{\tiny{\textcircled{\tiny\#}}}a$, thus
$aa^{\tiny{\textcircled{\tiny\#}}}=bb^{\tiny{\textcircled{\tiny\#}}}aa^{\tiny{\textcircled{\tiny\#}}}=ba^{\tiny{\textcircled{\tiny\#}}}$.\\
$(4) \Rightarrow (1)$ Suppose $a$ $\ast \!\!\leq b$ and $b^{\tiny{\textcircled{\tiny\#}}}ab^{\tiny{\textcircled{\tiny\#}}}=a^{\tiny{\textcircled{\tiny\#}}}.$
Then $a^{\ast}a=a^{\ast}b$, thus by Lemma \ref{lemma-partial2}, we have $a^{\tiny{\textcircled{\tiny\#}}}a=a^{\tiny{\textcircled{\tiny\#}}}b.$
By $a$ $\ast \!\!\leq b$, we have $a=bb^{\tiny{\textcircled{\tiny\#}}}a$, which gives
$$ba^{\tiny{\textcircled{\tiny\#}}}=b(b^{\tiny{\textcircled{\tiny\#}}}ab^{\tiny{\textcircled{\tiny\#}}})=ab^{\tiny{\textcircled{\tiny\#}}}.$$
Pre-multiplication of $b^{\tiny{\textcircled{\tiny\#}}}ab^{\tiny{\textcircled{\tiny\#}}}=a^{\tiny{\textcircled{\tiny\#}}}$ by $a$ and
post-multiplication of $b^{\tiny{\textcircled{\tiny\#}}}ab^{\tiny{\textcircled{\tiny\#}}}=a^{\tiny{\textcircled{\tiny\#}}}$ by $bb^{\tiny{\textcircled{\tiny\#}}}$ yield
$$aa^{\tiny{\textcircled{\tiny\#}}}bb^{\tiny{\textcircled{\tiny\#}}}
=ab^{\tiny{\textcircled{\tiny\#}}}ab^{\tiny{\textcircled{\tiny\#}}}bb^{\tiny{\textcircled{\tiny\#}}}=aa^{\tiny{\textcircled{\tiny\#}}}.$$
Since $a^{\tiny{\textcircled{\tiny\#}}}a=a^{\tiny{\textcircled{\tiny\#}}}b$, we have
$aa^{\tiny{\textcircled{\tiny\#}}}=aa^{\tiny{\textcircled{\tiny\#}}}bb^{\tiny{\textcircled{\tiny\#}}}
=aa^{\tiny{\textcircled{\tiny\#}}}ab^{\tiny{\textcircled{\tiny\#}}}=ab^{\tiny{\textcircled{\tiny\#}}}$.
Thus by $ba^{\tiny{\textcircled{\tiny\#}}}=ab^{\tiny{\textcircled{\tiny\#}}}$ and the definition of core partial order,
we have $a\overset{\tiny{\textcircled{\tiny\#}}}\leq b$.\\
$(5) \Rightarrow (1)$ Suppose that $a\leq_{\#} b$ and $ba^{\tiny{\textcircled{\tiny\#}}}b=a$. Then $aa^\#=ba^\#$ and $Ra\subseteq Rb$,
by Lemma \ref{lemma-partial2}, we have
$aa^\#=ba^\#$ if and only if $aa^{\tiny{\textcircled{\tiny\#}}}=ba^{\tiny{\textcircled{\tiny\#}}}$, thus
$a^{\tiny{\textcircled{\tiny\#}}}a=a^{\tiny{\textcircled{\tiny\#}}}ba^{\tiny{\textcircled{\tiny\#}}}b=
a^{\tiny{\textcircled{\tiny\#}}}aa^{\tiny{\textcircled{\tiny\#}}}b=a^{\tiny{\textcircled{\tiny\#}}}b.$\\
$(6) \Rightarrow (1)$ By $(5) \Rightarrow (1)$, we only need to prove $a^{\tiny{\textcircled{\tiny\#}}}a=a^{\tiny{\textcircled{\tiny\#}}}b$.\\
Since $Ra\subseteq Rb$ is equivalent to $a=ab^{\tiny{\textcircled{\tiny\#}}}b$, we have
$a^{\tiny{\textcircled{\tiny\#}}}a=a^{\tiny{\textcircled{\tiny\#}}}ab^{\tiny{\textcircled{\tiny\#}}}b=a^{\tiny{\textcircled{\tiny\#}}}b.$
\end{proof}

The right star partial order $a\leq\!\!\ast$ $b$ is defined as: $aa^{\ast}=ba^{\ast}$ and $Ra\subseteq Rb.$

\begin{rem}\label{slip2}
\emph{Let $a\in R^{\tiny{\textcircled{\tiny\#}}}$ and $b\in R^{EP}$.
In \cite[Theorem $2.9$]{M2}, it is claimed that $a\overset{\tiny{\textcircled{\tiny\#}}}\leq b$
if and only if $a\leq\!\!\ast$ $b$ and $b^{\tiny{\textcircled{\tiny\#}}}ab^{\tiny{\textcircled{\tiny\#}}}=a^{\tiny{\textcircled{\tiny\#}}}$
in the complex matrix case. But it is not true.
In fact, let
$A=\left[\begin{matrix}
              1 & 1 \\
              0 & 0
       \end{matrix}
       \right],
B=\left[\begin{matrix}
              1 & 1 \\
              0 & 1
       \end{matrix}
       \right]\in M_{2}\{\mathbb{C}\}$\normalsize,
then $A$ is core invertible, $B$ is an EP matrix and the condition $A\overset{\tiny{\textcircled{\tiny\#}}}\leq B$ is satisfied,
but $AA^{\ast}\neq BA^{\ast}.$}
\end{rem}

The equivalence of $(2)$-$(4)$ in the following proposition for the complex matrices has been proved by Malik et al. in \cite[Lemma 19]{MRT}.
\begin{prop} \label{k-core}
Let $a\in R^{\tiny{\textcircled{\tiny\#}}}, ~b\in R$ with $a\overset{\tiny{\textcircled{\tiny\#}}}\leq b$. Then the following conditions are equivalent:
\begin{itemize}
\item[{\rm (1)}] $a\overset{\#}\leq b$;
\item[{\rm (2)}] $ab=ba$;
\item[{\rm (3)}] $a^{2}\overset{\tiny{\textcircled{\tiny\#}}}\leq b^{2}$;
\item[{\rm (4)}] $a^{k}\overset{\tiny{\textcircled{\tiny\#}}}\leq b^{k}$, for any $k\geq 2$.
\end{itemize}
\end{prop}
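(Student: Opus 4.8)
The plan is to prove $(1)\Leftrightarrow(2)$ directly and then close the cycle $(2)\Rightarrow(4)\Rightarrow(3)\Rightarrow(2)$, keeping the standing hypothesis $a\overset{\tiny{\textcircled{\tiny\#}}}\leq b$ in force throughout. First I would record the two facts this hypothesis yields through Lemma \ref{lemma-partial2}: from $a^{\tiny{\textcircled{\tiny\#}}}a=a^{\tiny{\textcircled{\tiny\#}}}b$ we get $a^{\ast}a=a^{\ast}b$, and from $aa^{\tiny{\textcircled{\tiny\#}}}=ba^{\tiny{\textcircled{\tiny\#}}}$ we get $a^{2}=ba$. Writing $c=b-a$, these read $a^{\ast}c=0$ and $ca=0$, which are the workhorses below. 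For $(1)\Leftrightarrow(2)$: since $a\in R^{\tiny{\textcircled{\tiny\#}}}\subseteq R^{\#}$, Lemma \ref{lemma-partial1}(3) says $a\overset{\#}\leq b$ iff $a^{2}=ab=ba$; as $a^{2}=ba$ already holds, $a\overset{\#}\leq b$ is equivalent to $a^{2}=ab$, which, using $a^{2}=ba$ once more, is exactly $ab=ba$.

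Before treating the higher orders I would establish that $a^{k}\in R^{\tiny{\textcircled{\tiny\#}}}$ for every $k\geq 1$, with $(a^{\tiny{\textcircled{\tiny\#}}})^{k}$ as core inverse. This is verified against Lemma \ref{five-equations-yy} after proving, by induction from $a(a^{\tiny{\textcircled{\tiny\#}}})^{2}=a^{\tiny{\textcircled{\tiny\#}}}$ and $a^{\tiny{\textcircled{\tiny\#}}}a^{2}=a$, the telescoping identities $a^{j}(a^{\tiny{\textcircled{\tiny\#}}})^{j}=aa^{\tiny{\textcircled{\tiny\#}}}$ and $(a^{\tiny{\textcircled{\tiny\#}}})^{j}a^{j}=a^{\tiny{\textcircled{\tiny\#}}}a$. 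This makes $a^{k}\overset{\tiny{\textcircled{\tiny\#}}}\leq b^{k}$ meaningful and allows me to apply Lemma \ref{lemma-partial2} with $a^{k},b^{k}$ in the roles of $a,b$. Now for $(2)\Rightarrow(4)$, assume $ab=ba$; then by $(1)\Leftrightarrow(2)$ and Lemma \ref{lemma-partial1}(4) there is an idempotent $p$ with $a=pb=bp$, and since $p$ commutes with $b$ we get $a^{k}=pb^{k}=b^{k}p$. Lemma \ref{lemma-partial2} turns the two defining conditions of $a^{k}\overset{\tiny{\textcircled{\tiny\#}}}\leq b^{k}$ into $(a^{\ast})^{k}a^{k}=(a^{\ast})^{k}b^{k}$ and $(a^{k})^{2}=b^{k}a^{k}$. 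The latter is immediate from $a^{k}=pb^{k}=b^{k}p$ and $p^{2}=p$, giving $a^{2k}=b^{2k}p=b^{k}a^{k}$; the former follows because commutativity gives $b^{k}-a^{k}=c(b^{k-1}+b^{k-2}a+\cdots+a^{k-1})$, while $a^{\ast}c=0$ propagates to $(a^{\ast})^{k}c=0$, whence $(a^{\ast})^{k}(b^{k}-a^{k})=0$. The implication $(4)\Rightarrow(3)$ is just the case $k=2$.

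For $(3)\Rightarrow(2)$, from $a^{2}\overset{\tiny{\textcircled{\tiny\#}}}\leq b^{2}$ Lemma \ref{lemma-partial2}(1) applied to $a^{2},b^{2}$ gives $(a^{\ast})^{2}a^{2}=(a^{\ast})^{2}b^{2}$. Using $ca=0$ I first compute $b^{2}-a^{2}=ac+c^{2}=bc$, and using $a^{\ast}a=a^{\ast}b$ I get $(a^{\ast})^{2}b=(a^{\ast})^{2}a$, so the identity collapses to $(a^{\ast})^{2}ac=0$. The decisive step is to cancel $(a^{\ast})^{2}$: pre-multiplying by $((a^{\tiny{\textcircled{\tiny\#}}})^{2})^{\ast}$ and invoking $a^{2}(a^{\tiny{\textcircled{\tiny\#}}})^{2}=aa^{\tiny{\textcircled{\tiny\#}}}=(aa^{\tiny{\textcircled{\tiny\#}}})^{\ast}$ together with $aa^{\tiny{\textcircled{\tiny\#}}}a=a$ rewrites $((a^{\tiny{\textcircled{\tiny\#}}})^{2})^{\ast}(a^{\ast})^{2}ac$ as $aa^{\tiny{\textcircled{\tiny\#}}}(ac)=ac$, forcing $ac=0$. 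Thus $ab=a^{2}=ba$, which is condition $(2)$, and the four statements are equivalent.

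The main obstacle is precisely this final cancellation: $a^{\ast}$ is not left-cancellable in general, and the correct device is to pre-multiply by the adjoint of $(a^{\tiny{\textcircled{\tiny\#}}})^{2}$ so that the self-adjoint idempotent $aa^{\tiny{\textcircled{\tiny\#}}}$ materializes and then absorbs into $a$ via $aa^{\tiny{\textcircled{\tiny\#}}}a=a$; getting the algebra in $(3)\Rightarrow(2)$ down to the single clean relation $(a^{\ast})^{2}ac=0$ is what makes the cancellation possible. A secondary but necessary technical point is the power core-invertibility $a^{k}\in R^{\tiny{\textcircled{\tiny\#}}}$, without which the higher core orders in $(3)$ and $(4)$ are not even well defined.
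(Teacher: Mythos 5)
Your proof is correct, and although it follows the same cycle skeleton as the paper's --- $(1)\Leftrightarrow(2)$, then $(2)\Rightarrow(4)\Rightarrow(3)\Rightarrow$ back --- the working parts of the two arguments are genuinely different. Both begin by using Lemma \ref{lemma-partial2} to turn the standing hypothesis into $a^{\ast}(b-a)=0$ and $(b-a)a=0$. But for $(2)\Rightarrow(4)$ the paper never leaves core-inverse algebra: it shows $ab^{k-1}=a^{k}$ and then verifies $(a^{k})^{\tiny{\textcircled{\tiny\#}}}b^{k}=(a^{k})^{\tiny{\textcircled{\tiny\#}}}a^{k}$ by substituting $a^{\tiny{\textcircled{\tiny\#}}}b=a^{\tiny{\textcircled{\tiny\#}}}a$ repeatedly, whereas you extract the commuting idempotent $p$ with $a=pb=bp$ from Lemma \ref{lemma-partial1}(4), deduce $a^{k}=pb^{k}=b^{k}p$, and check the two conditions that Lemma \ref{lemma-partial2} demands, using the factorization $b^{k}-a^{k}=(b-a)(b^{k-1}+\cdots+a^{k-1})$. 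For the return trip, the paper proves $(3)\Rightarrow(1)$ by computing $(a^{2})^{\tiny{\textcircled{\tiny\#}}}a^{2}=a^{\#}a$ and $(a^{2})^{\tiny{\textcircled{\tiny\#}}}b^{2}=a^{\#}b$, which silently invokes the identity $(a^{\tiny{\textcircled{\tiny\#}}})^{2}a=a^{\#}$; you instead prove $(3)\Rightarrow(2)$ by collapsing the star condition to $(a^{\ast})^{2}a(b-a)=0$ and cancelling $(a^{\ast})^{2}$ through pre-multiplication by $((a^{\tiny{\textcircled{\tiny\#}}})^{2})^{\ast}$, so that the self-adjoint idempotent $aa^{\tiny{\textcircled{\tiny\#}}}$ materializes and absorbs into $a$ --- a clean device that avoids the group inverse entirely. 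Your route also buys rigor at a point the paper glosses over: both treatments need $a^{k}\in R^{\tiny{\textcircled{\tiny\#}}}$ with $(a^{k})^{\tiny{\textcircled{\tiny\#}}}=(a^{\tiny{\textcircled{\tiny\#}}})^{k}$ for conditions $(3)$ and $(4)$ even to make sense, and you prove this from Lemma \ref{five-equations-yy}, while the paper uses it without comment; the price is that your argument is somewhat longer than the paper's, which gains brevity precisely by leaning on that identity and on $(a^{\tiny{\textcircled{\tiny\#}}})^{2}a=a^{\#}$.
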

\begin{proof}
By Lemma \ref{lemma-partial2}, we have $a\overset{\tiny{\textcircled{\tiny\#}}}\leq b$ if and only if both $a^{*}a=a^{*}b$ and $ba=a^{2}$ hold.

$(1)\Rightarrow(2)$ is obvious by Lemma \ref{lemma-partial1}.

$(2)\Rightarrow(4)$ If $ab=ba$, then $ab=ba=a^{2}$ by Lemma \ref{lemma-partial2}.
If $k\geq 2$, first show $ab^{k-1}=a^{k}$.
 When $k=2$, $ab=ba=a^{2}$;
 when $k>2$, $ab^{k-1}=a^{2}b^{k-2}=a^{2}bb^{k-3}=a^{3}b^{k-3}= \cdots=a^{k}.$
Next prove $(a^{k})^{\tiny{\textcircled{\tiny\#}}}a^{k}=(a^{k})^{\tiny{\textcircled{\tiny\#}}}b^{k}.$
In fact,$(a^{k})^{\tiny{\textcircled{\tiny\#}}}b^{k}
            =(a^{\tiny{\textcircled{\tiny\#}}})^{k}b^{k}
            =(a^{\tiny{\textcircled{\tiny\#}}})^{k-1}a^{\tiny{\textcircled{\tiny\#}}}bb^{k-1}
            =(a^{\tiny{\textcircled{\tiny\#}}})^{k-1}a^{\tiny{\textcircled{\tiny\#}}}ab^{k-1}
            =(a^{\tiny{\textcircled{\tiny\#}}})^{k}ab^{k-1}
            =(a^{k})^{\tiny{\textcircled{\tiny\#}}}ab^{k-1}
            =(a^{k})^{\tiny{\textcircled{\tiny\#}}}a^{k}.$
Similarly, $b^{k}(a^{k})^{\tiny{\textcircled{\tiny\#}}}=a^{k}(a^{k})^{\tiny{\textcircled{\tiny\#}}}.$

$(4)\Rightarrow(3)$ Taking $k=2$.

$(3)\Rightarrow(1)$
       If $a^{2}\overset{\tiny{\textcircled{\tiny\#}}}\leq b^{2}$,
       then $(a^{2})^{\tiny{\textcircled{\tiny\#}}}a^{2}=(a^{2})^{\tiny{\textcircled{\tiny\#}}}b^{2}.$ And
       $$(a^{2})^{\tiny{\textcircled{\tiny\#}}}a^{2}
       =(a^{\tiny{\textcircled{\tiny\#}}})^{2}a^{2}
       =a^{\tiny{\textcircled{\tiny\#}}}a^{\tiny{\textcircled{\tiny\#}}}a^{2}
       =a^{\tiny{\textcircled{\tiny\#}}}a
       =a^{\#}a,$$
       $$(a^{2})^{\tiny{\textcircled{\tiny\#}}}b^{2}
       =(a^{\tiny{\textcircled{\tiny\#}}})^{2}b^{2}
       =a^{\tiny{\textcircled{\tiny\#}}}a^{\tiny{\textcircled{\tiny\#}}}bb
       =a^{\tiny{\textcircled{\tiny\#}}}a^{\tiny{\textcircled{\tiny\#}}}ab
       =a^{\#}b,$$
thus $a^{\#}a=a^{\#}b$. Hence $a^{2}=aaa^{\#}a=aaa^{\#}b=ab=ba$ by $ba=a^{2}$.
\end{proof}

A complex matrix $A$ is called range-Hermite, if $\mathcal{R}(A)=\mathcal{R}(A^{\ast})$.
In \cite[Theorem 7]{BT}, Baksalary and Trenker proved that for complex matrices $A$ and $B$, if
$A$ is an range-Hermite matrix, then
$A\overset{\tiny{\textcircled{\tiny\#}}}\leq B$ if and only if $A\overset{\ast}\leq B$.
In \cite[Theorem 3.3]{M2}, Mailk proved that for complex matrices $A$ and $B$, if
$A$ is an range-Hermite matrix, then $A\overset{\tiny{\textcircled{\tiny\#}}}\leq B$
if and only if $A\overset{\#}\leq B$.
It is easy to check that the following proposition is valid for elements in rings by \cite[Theorem 3.1]{RDD}.

\begin{prop}  \label{EP-core-other}
Let $a\in R^{EP}$ and $b\in R$. Then the following are equivalent:
\begin{itemize}
\item[{\rm (1)}] $a\overset{\tiny{\textcircled{\tiny\#}}}\leq b$;
\item[{\rm (2)}] $a\overset{\#}\leq b$;
\item[{\rm (3)}] $a\overset{\ast}\leq b$.
\end{itemize}
\end{prop}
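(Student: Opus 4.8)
The plan is to reduce all three orders to one another by exploiting that, for an EP element, the core inverse, the group inverse and the Moore--Penrose inverse coincide. Indeed, since $a\in R^{EP}$ we have $a^{\dagger}=a^{\#}$, and $a^{\#}$ satisfies the three defining axioms of a core inverse: $aa^{\#}a=a$ is the group-inverse relation, $a^{\#}R=aR$ holds automatically for every group invertible element, and $Ra^{\#}=Ra^{\dagger}=Ra^{*}$, the last equality being standard for the Moore--Penrose inverse. By uniqueness of the core inverse this shows $a\in R^{\tiny{\textcircled{\tiny\#}}}$ with $a^{\tiny{\textcircled{\tiny\#}}}=a^{\#}=a^{\dagger}$; alternatively one may quote the EP characterization of \cite[Theorem 3.1]{RDD} directly. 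In particular $R^{EP}\subseteq R^{\tiny{\textcircled{\tiny\#}}}$, so Lemma \ref{lemma-partial2} applies throughout.

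Granting this identity, $(1)\Leftrightarrow(2)$ is immediate, since the defining equations $a^{\tiny{\textcircled{\tiny\#}}}a=a^{\tiny{\textcircled{\tiny\#}}}b$ and $aa^{\tiny{\textcircled{\tiny\#}}}=ba^{\tiny{\textcircled{\tiny\#}}}$ of $a\overset{\tiny{\textcircled{\tiny\#}}}\leq b$ become verbatim the equations $a^{\#}a=a^{\#}b$ and $aa^{\#}=ba^{\#}$ of $a\overset{\#}\leq b$.

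For $(1)\Leftrightarrow(3)$ I would match the two defining equations one at a time. The \emph{left} equations already coincide for every core invertible element: by Lemma \ref{lemma-partial2}(1), $a^{\tiny{\textcircled{\tiny\#}}}a=a^{\tiny{\textcircled{\tiny\#}}}b$ holds if and only if $a^{*}a=a^{*}b$. For the \emph{right} equations, Lemma \ref{lemma-partial2}(2) turns $aa^{\tiny{\textcircled{\tiny\#}}}=ba^{\tiny{\textcircled{\tiny\#}}}$ into $a^{2}=ba$, i.e. $(b-a)a=0$, whereas the star condition $aa^{*}=ba^{*}$ reads $(b-a)a^{*}=0$. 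Here the EP hypothesis enters through $aR=a^{*}R$: writing $a^{*}=at$ and $a=a^{*}s$ one obtains ${}^{\circ}a={}^{\circ}(a^{*})$, so $(b-a)a=0$ if and only if $(b-a)a^{*}=0$. Combining the two equivalences yields $(1)\Leftrightarrow(3)$.

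The only real content is the structural input: the identity $a^{\tiny{\textcircled{\tiny\#}}}=a^{\#}$ and the annihilator equality ${}^{\circ}a={}^{\circ}(a^{*})$, both resting on the EP characterization $aR=a^{*}R$. After that everything is a direct appeal to Lemma \ref{lemma-partial2}, with no computation beyond substituting one inverse for another.
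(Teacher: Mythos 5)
Your proof is correct and follows essentially the same route the paper takes: the paper offers no written argument beyond invoking \cite[Theorem 3.1]{RDD} (which gives exactly your structural input $a^{\tiny{\textcircled{\tiny\#}}}=a^{\#}=a^{\dagger}$ for EP elements) and declaring the rest ``easy to check.'' Your proposal simply supplies those checks in full --- the verbatim identification of the core and sharp order equations, and the reduction of the star order to the core order via Lemma \ref{lemma-partial2} together with ${}^{\circ}a={}^{\circ}(a^{*})$ from $aR=a^{*}R$ --- all of which are sound.
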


\vspace{0.2cm} \noindent {\large\bf Acknowledgements}

This research is supported by the National Natural Science Foundation of China (No. 11371089),
The second author is grateful to China Scholarship Council for giving him a purse for his further study in Universidad Polit\'{e}cnica de Valencia, Spain.

\end{document}